\def\dom{\mathop{\rm dom}\nolimits}
\def\int{\mathop{\rm int}\nolimits}
\def\cl{\mathop{\rm cl}\nolimits}
\def\img{\mathop{\rm img}\nolimits}
\def\cf{\mathop{\rm cf}\nolimits}
\def\moverlay{\mathpalette\mov@rlay}
\def\mov@rlay#1#2{\leavevmode\vtop{%
   \baselineskip\z@skip \lineskiplimit-\maxdimen
   \ialign{\hfil$\m@th#1##$\hfil\cr#2\crcr}}}
\newcommand{\charfusion}[3][\mathord]{
    #1{\ifx#1\mathop\vphantom{#2}\fi
        \mathpalette\mov@rlay{#2\cr#3}
      }
    \ifx#1\mathop\expandafter\displaylimits\fi}
\newcommand{\cupdot}{\charfusion[\mathbin]{\cup}{\cdot}}
\newcommand{\bigcupdot}{\charfusion[\mathop]{\bigcup}{\cdot}}
\renewcommand{\restriction}{\mathord{\upharpoonright}}
\newtheorem{theorem}{Theorem}[section]
\newtheorem{lemma}[theorem]{Lemma}
\newtheorem{proposition}[theorem]{Proposition}
\newtheorem{corollary}[theorem]{Corollary}
\theoremstyle{definition}
\newtheorem{definition}[theorem]{Definition}
\numberwithin{equation}{section}
\theoremstyle{remark}
\newtheorem{remark}[theorem]{Remark}
\newtheorem*{nclaim}{Claim}
\newtheorem{claim}{Claim}[theorem]
\newenvironment{claimproof}[1]{\par\noindent\textit{Proof of claim:}\space#1}{\hfill $\blacksquare$\bigskip}
\def\dom{\mathop{\rm dom}\nolimits}
\def\img{\mathop{\rm img}\nolimits}
\def\ON{\mathop{\rm ON}\nolimits}
\title{The Open Coloring Axiom}
\author{Tonatiuh Matos-Wiederhold}
\date{January 27, 2022}
\begin{document}

\maketitle

\begin{abstract}
 This work is concerned with an axiom introduced by Todorc\v{e}vi\'{c} in \cite{stevo} that constitutes a Ramsey-like statement regarding the topology of the reals. Our aim is to explain the axiom in detail, give some interesting applications and finally prove that the axiom is indeed consistent with ZFC, so that it makes sense to consider working with it in the first place.
 
 For this particular academic endeavour, we cover several advanced topics in set theory, including concepts like {\sl Hausdorff gaps}, forcing, infinitary combinatorics and a tad of topology. We employ, for example, an argument based on Rothberger's theorem to show that the Open Coloring Axiom implies the equality $\mathfrak b=\aleph_2$, which in turn makes this axiom inconsistent with CH. In other words, in ZFC, the Open Coloring Axiom could be false. To prove its relative consistency, we show that the axiom could be true by following a rather long and technical lemma of Todorc\v{e}vi\'{c}, which leads to the culmination of this work.
\end{abstract}

\section{Preliminaries}

 All the notation and basic results regarding set theory not explicitly defined here must be understood as in \cite{hernandez}.
 
 The symbol $\omega$ denotes the first transfinite ordinal number and $\mathfrak c$ the cardinality of the set of real numbers, i.e., $\mathfrak c=|\mathbb R|=2^{\aleph_0}$. A set with exactly $\mathfrak c$ elements is also said to be of size {\sl continuum}. For us, a set is {\sl countable} if it has at most $\aleph_0$ elements. If $\kappa$ is a cardinal number, $[A]^\kappa$ is the family of all subsets of $A$ having exactly $\kappa$ elements. Formally, $$[A]^\kappa:=\{B\subseteq A\colon |B|=\kappa\}.$$ In particular, $[A]^2$ is the set of all unordered pairs of elements of $A$. The class of all ordinal numbers is denoted by $\ON$.
 
 Given two sets $A$ and $B$, the collection of functions from $A$ to $B$ will be denoted by the symbol $B^A$; if both $A$ and $B$ happen to be cardinal numbers, we may choose the notation ${}^AB$ instead to avoid confusion (since, for example, $\mathfrak c^n$ and $\mathfrak c$ are equal in cardinal arithmetic but are different as sets of functions). In particular, if $\alpha$ is an ordinal number, then $A^{<\alpha}:=\bigcup_{\beta<\alpha}A^\beta$. For example, $\omega^{<\omega}$ is the set of all finite sequences of natural numbers. We will use the set-theoretical definition of {\sl function}. Hence, for a function $f$, $\dom(f)$ and $\img(f)$ denote the domain and image of $f$, respectively; also, $f[A]:=\{f(a)\colon a\in A\cap\dom(f)\}$ and $f^{-1}[A]:=\{a\in\dom(f)\colon f(a)\in A\}$. In particular, if $y$ is an element of the codomain of the function $f$, then the {\sl fiber of $y$ under $f$} is the set $f^{-1}[\{y\}]$. Finally, the {\sl restriction} of $f$ to $A$ is defined by $f\restriction A:=\{(a,b)\in f\colon a\in A\}$.
 
 We use the symbol $\bigcupdot$ to denote {\sl disjoint union}, that is, $A=\bigcupdot B$ means that $A=\bigcup B$ and that, for any two distinct elements of $B$, their intersection is empty. Similarly, the disjoint union of just two sets is written as $A\cupdot B$.
 
 A {\sl topological space} will be a pair $(X,\tau)$ where $X$ is a non-empty set and $\tau$ (the {\sl topology}) is a family of subsets of $X$ that includes both $X$ and $\emptyset$ and is closed under (arbitrary) unions and finite intersections. Elements of $\tau$ are called {\sl open} subsets of $(X,\tau)$. Whenever there's no risk of ambiguity, we may refer to the topological space simply by $X$ and we use both $\tau(X)$ and $\tau_X$ to denote its respective topology. As usual, for any point $x\in X$, $\tau_X(x)$ the collection of open sets that contain $x$.
 
 All basic results and notation in topology can be found in \cite{tamariz}, including for instance, the definition and basic properties of the {\sl product topology} and the topological {\sl closure of a set $A$ in the space $X$}, which we denote by $\cl_X(A)$. We will use the fact that $\omega^\omega$, seen as a product of countably many discrete spaces, has the following properties (see \cite[Proposición~1.15, p.~9]{dania} for details): given $t\in\omega^{<\omega}$, $[t]:=\{f\in\omega^\omega\colon t\subseteq f\}$ is an open subset of $\omega^\omega$ and $\{[s]\colon s\in\omega^{<\omega}\}$ is a countable basis for $\omega^\omega$. 

 The following combinatorial lemmas will prove useful later on as part of long proofs. We prove them here as an attempt to keep said proofs free of distracting arguments.
 
\begin{lemma}\label{lemma:injection}
 If $\psi\colon\omega_1\to\omega_1$ is an injection, then there is an uncountable set $E\subseteq\omega_1$ such that $\psi\restriction E$ is (strictly) increasing.
\end{lemma}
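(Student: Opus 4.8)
The plan is to build the set $E$ directly, as the range of a strictly increasing sequence $\langle\alpha_\xi:\xi<\omega_1\rangle$ of countable ordinals, constructed by transfinite recursion, while simultaneously forcing the sequence $\langle\psi(\alpha_\xi):\xi<\omega_1\rangle$ to be strictly increasing as well. If this can be arranged, then $E:=\{\alpha_\xi:\xi<\omega_1\}$ is an uncountable set (the assignment $\xi\mapsto\alpha_\xi$ being injective) on which $\psi$ is order preserving, which is exactly the conclusion sought. One could instead hope to quote a partition relation on $\omega_1$, but the direct recursion is cleaner and entirely self-contained.

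For the recursion, suppose that at stage $\xi<\omega_1$ we have already chosen $\langle\alpha_\eta:\eta<\xi\rangle$ so that both this sequence and $\langle\psi(\alpha_\eta):\eta<\xi\rangle$ are strictly increasing. Put $\beta:=\sup_{\eta<\xi}\alpha_\eta$ and $\delta:=\sup_{\eta<\xi}\psi(\alpha_\eta)$, with the convention $\sup\emptyset=0$. Since $\xi$ is countable and $\omega_1$ is regular, both $\beta$ and $\delta$ are countable ordinals. We wish to pick $\alpha_\xi$ with $\alpha_\xi>\beta$ and $\psi(\alpha_\xi)>\delta$. The ordinals violating the first demand form the set $\beta+1$, which is countable; those violating the second form the set $\psi^{-1}[\delta+1]$, which is also countable, \emph{because $\psi$ is injective} and $|\delta+1|\le\aleph_0$. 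Hence the union of these two ``forbidden'' sets is a countable subset of $\omega_1$ and so is a proper subset of $\omega_1$; we may therefore let $\alpha_\xi$ be, say, the least ordinal not in it. This maintains both inductive hypotheses, so the construction proceeds through all $\xi<\omega_1$.

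It then remains only to read off the statement. Each $\alpha_\xi$ was chosen strictly above the supremum of its predecessors, so $\xi\mapsto\alpha_\xi$ is strictly increasing; in particular it is injective, whence $E=\{\alpha_\xi:\xi<\omega_1\}$ has size $\aleph_1$ and is uncountable. Moreover, if $\alpha_\xi<\alpha_\eta$ are elements of $E$, then $\xi<\eta$, and so $\psi(\alpha_\xi)<\psi(\alpha_\eta)$ by the invariant we preserved; thus $\psi\restriction E$ is strictly increasing.

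The single real obstacle is the successor/limit step of the recursion — namely, verifying that one never gets stuck — and that is precisely where injectivity of $\psi$ does the work: for a non-injective $\psi$ the set $\psi^{-1}[\delta+1]$ can be all of $\omega_1$ (as for a constant function), and no admissible $\alpha_\xi$ would exist. The regularity of $\omega_1$ is also used, in two places: to guarantee that the countable suprema $\beta$ and $\delta$ stay below $\omega_1$, and to guarantee that the countable forbidden set at each stage cannot exhaust $\omega_1$.
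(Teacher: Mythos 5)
Your proof is correct and takes essentially the same approach as the paper: a transfinite recursion of length $\omega_1$ maintaining that both the chosen ordinals and their $\psi$-images are strictly increasing, with injectivity of $\psi$ guaranteeing the recursion never gets stuck. The only cosmetic difference is that you exclude a countable ``forbidden'' set $(\beta+1)\cup\psi^{-1}[\delta+1]$ directly, whereas the paper picks a sufficiently large value in the (uncountable, hence cofinal) image of a tail of $\omega_1$; the two mechanisms are interchangeable.
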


\begin{proof}
 We construct $E$ by recursion on $\omega_1$. Suppose that for $\vartheta<\omega_1$ we have an increasing sequence $\{e_\eta\colon\eta<\vartheta\}\subseteq\omega_1$ on which $\psi$ is strictly increasing (i.e., $\xi<\eta<\vartheta$ implies $\psi(e_\xi)<\psi(e_\eta)$). Since $e:=\sup_{\eta<\vartheta}e_\eta+1$ is a countable ordinal, $\omega_1\setminus e$ is an uncountable subset of $\omega_1$ and (by injectivity) so is $\psi[\omega_1\setminus e]$. Thus, this set is cofinal in $\omega_1$, which allows us to pick out an element $y\in\psi[\omega_1\setminus e]$ such that $\psi[e]\subseteq y$. Let $e_\vartheta\in\omega_1\setminus e$ be such that $\psi(e_\vartheta)=y$. Then, for all $\eta<\vartheta$, $e_\eta<e\leq e_\vartheta$ and so, $e_\eta<e_\vartheta$. Moreover, $\psi(e_\eta)<y=\psi(e_\vartheta)$.
 
 Finally, $E:=\{e_\xi\colon\xi<\omega_1\}$ is as needed.
\end{proof}

\begin{lemma}\label{lemma:2}
 Let $\kappa$ be an infinite cardinal number with cofinality greater than some cardinal $\mu$. If for some family of sets $\{E_\xi:\xi<\mu\}$, $\kappa=\bigcup_{\xi<\mu}E_\xi$, then there exists $\alpha<\mu$ such that $|E_\alpha|=\kappa$.
\end{lemma}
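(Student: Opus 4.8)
The plan is to argue by contradiction: I would assume that $|E_\xi|<\kappa$ for every $\xi<\mu$ and then show that the union $\bigcup_{\xi<\mu}E_\xi$ must have size $<\kappa$, which is absurd since by hypothesis this union is all of $\kappa$. Note first that whichever index $\alpha$ we eventually single out will automatically satisfy $|E_\alpha|\le\kappa$, because the hypothesis $\kappa=\bigcup_{\xi<\mu}E_\xi$ forces $E_\alpha\subseteq\kappa$; hence to get the equality $|E_\alpha|=\kappa$ it suffices to rule out the case in which all the $E_\xi$ have size strictly below $\kappa$.

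The first step is to bound the sizes uniformly. I would set $\lambda:=\sup\{|E_\xi|\colon\xi<\mu\}$ and check that $\lambda<\kappa$. If $\mu$ is finite this is immediate, since a finite supremum of ordinals below $\kappa$ is again below $\kappa$. If $\mu$ is infinite, the point is that $S:=\{|E_\xi|\colon\xi<\mu\}$ is a subset of $\kappa$ with $|S|\le\mu<\cf(\kappa)$, so by the very definition of cofinality $S$ cannot be cofinal in $\kappa$, and therefore $\lambda=\sup S<\kappa$.

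The second step is a routine cardinal-arithmetic estimate. Since every $E_\xi$ has size at most $\lambda$, the standard bound on cardinal sums gives
$$\left|\bigcup_{\xi<\mu}E_\xi\right|\le\sum_{\xi<\mu}|E_\xi|\le\mu\cdot\lambda.$$
Both $\mu$ and $\lambda$ are strictly less than $\kappa$ (the former because $\mu<\cf(\kappa)\le\kappa$), and $\kappa$ is infinite; since a product of two cardinals at least one of which is infinite equals the larger of the two, while a product of two finite cardinals is finite and hence $<\kappa$, we conclude $\mu\cdot\lambda<\kappa$. This yields $\kappa=\left|\bigcup_{\xi<\mu}E_\xi\right|<\kappa$, the desired contradiction.

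I do not expect any serious obstacle here: the proof is a short counting argument and the only points needing a little care are the degenerate case where $\mu$ is finite (where the hypothesis $\mu<\cf(\kappa)$ holds automatically because $\cf(\kappa)\ge\omega$) and making sure the final inequality $\mu\cdot\lambda<\kappa$ is justified in all cases, not merely when $\lambda$ is infinite.
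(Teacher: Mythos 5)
Your proof is correct, and it is self-contained where the paper's is not: the paper disposes of the lemma by a case split, observing that for $\kappa=\cf(\kappa)$ the statement is a standard equivalent of regularity and for singular $\kappa$ it is exactly \cite[Lemma~3.10]{jech}, so no actual argument is written out. You instead give the single unified counting argument that underlies both cited facts: assuming every $|E_\xi|<\kappa$, the hypothesis $\mu<\cf(\kappa)$ bounds $\lambda:=\sup_{\xi<\mu}|E_\xi|$ strictly below $\kappa$, and then $\kappa=\bigl|\bigcup_{\xi<\mu}E_\xi\bigr|\le\mu\cdot\lambda<\kappa$ is absurd. The details are all in order, including the reduction of $|E_\alpha|=\kappa$ to ruling out $|E_\xi|<\kappa$ for all $\xi$ (since each $E_\xi\subseteq\kappa$), the use of $\mu<\cf(\kappa)$ to keep $\{|E_\xi|:\xi<\mu\}$ bounded in $\kappa$, and the care taken with the degenerate finite cases of $\mu$ and $\lambda$. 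The paper's version buys brevity at the cost of outsourcing the content to references and of an arguably unnecessary regular/singular dichotomy; yours buys a proof the reader can check on the spot, at the cost of a few extra lines. Either is acceptable here.
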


\begin{proof}
 If $\kappa=\cf(\kappa)$, then the lemma is a common equivalence of being a regular cardinal. If, on the other hand, $\kappa$ is a singular cardinal, then \cite[Lemma~3.10, p.~32]{jech} gives our result immediately.
\end{proof}

 Given two functions $f,g\in\omega^\omega$, we write $f<^*g$ if there exists $k\in\omega$ such that $f(n)<g(n)$, whenever $n\in\omega\setminus(k+1)$. In other words, $f<^*g$ means that the inequality $f(n)<g(n)$ holds for all but finitely many natural numbers. Similarly, we define the relation $f\leq^*g$ ({\sl $g$ dominates $f$} or {\sl $f$ is dominated by $g$}) when $f(n)\leq g(n)$ is true for all but finitely many non-negative integers.
 
 Both relations, $<^*$ and $\leq^*$, are transitive. $\leq^*$ is also reflexive and the relations $f\leq^*g$ and $g\leq^*f$ imply that $\{n<\omega\colon f(n)=g(n)\}$ is cofinite, a property often abbreviated by the symbol $f=^*g$. We like to emphasize the fact that $<^*$ is not the strict pre-order induced by $\leq^*$, as it is easily seen by taking two functions that differ in exactly one point, for example.
 
 We say that a set $S\subseteq\omega^\omega$ is {\sl bounded} if there is a function from $\omega$ into $\omega$ which dominates all members of $S$. Otherwise, $S$ will be called {\sl unbounded}. Clearly, $\omega^\omega$ is unbounded, so let's denote by $\mathfrak b$ the minimum size of an unbounded set. This cardinal number is sometimes called {\sl the bounding number}.

\begin{lemma}\label{lemma:b}
 For each countable set $S\subseteq\omega^\omega$ there is $f$, a strictly increasing function from $\omega$ into $\omega$, in such a way that $g<^*f$, for all $g\in S$.
\end{lemma}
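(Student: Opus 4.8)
The plan is a direct diagonalization, with a little care taken so that strict monotonicity of $f$ falls out automatically. First I would dispose of the trivial case: if $S=\emptyset$, any strictly increasing function works, so assume $S\neq\emptyset$ and fix an enumeration $S=\{g_n\colon n\in\omega\}$, allowing repetitions (this is needed because $S$ may be finite). The underlying idea is to build $f$ so that by the time we reach coordinate $n$, the value $f(n)$ already dominates $g_k(n)$ for every index $k\le n$; since each $g\in S$ occurs as some $g_k$, that index is eventually $\le n$, which will give $g<^*f$.

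Concretely, I would simply set
$$f(n):=n+1+\max\{g_i(j)\colon i\le n,\ j\le n\}.$$
Then there are two things to check. First, $f$ is strictly increasing: the maximum defining $f(n+1)$ ranges over a superset of the one defining $f(n)$, hence is at least as large, while the additive term grows from $n+1$ to $n+2$; therefore $f(n+1)>f(n)$. Second, $f$ eventually dominates each $g\in S$: writing $g=g_k$, for every $n\ge k$ we have $g_k(n)\le\max\{g_i(j)\colon i\le n,\ j\le n\}<f(n)$, so $g<^*f$ with witness $k$.

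I do not expect a genuine obstacle here; the only mild subtlety is organizational. Taking the maximum over the square $\{0,\dots,n\}^2$ rather than only over $\{g_i(n)\colon i\le n\}$ is precisely what makes strict monotonicity immediate, since the sequence $n\mapsto\max_{i\le n}g_i(n)$ could decrease and the $+(n+1)$ term might then fail to compensate. With the square version, the max is monotone in $n$, so no separate recursive correction of $f$ is required, and the whole argument is a two-line verification.
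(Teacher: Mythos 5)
Your proof is correct and is essentially the same diagonalization as the paper's: the paper sets $f(n):=\max_{\ell\leq k\leq n}(f_\ell(k)+1)+n$, taking the maximum over the triangle $\ell\leq k\leq n$ rather than your square $\{0,\dots,n\}^2$, but both choices make the maximum monotone in $n$ so that strict increase and eventual domination follow at once. Your explicit handling of $S=\emptyset$ is a minor tidiness the paper omits.
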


\begin{proof}
 Start by fixing $\{f_\ell\colon\ell<\omega\}$, an enumeration of $S$ (possibly with repetitions). Then the function $f\colon\omega\longrightarrow\omega$ given by $$f(n):=\max_{\ell\leq k\leq n}(f_\ell(k)+1)+n$$ is strictly increasing and satisfies that, for all $\ell<\omega$ and for all $k\geq\ell$, $f_\ell(k)<f(k)$.
\end{proof}

 The following inequalities hold as an immediate consequence of the previous result.

\begin{corollary}
 $\aleph_0<\frak b\leq\frak c.$
\end{corollary}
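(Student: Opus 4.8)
The plan is to verify the two inequalities in turn, each essentially unwinding a definition, with a single appeal to Lemma~\ref{lemma:b}.

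For the upper bound $\mathfrak b\leq\mathfrak c$, recall that the preceding text already observes that $\omega^\omega$ is itself unbounded: no $f\in\omega^\omega$ can dominate the function $n\mapsto f(n)+1$, since that function is strictly larger than $f$ at every coordinate. Thus $\omega^\omega$ is an unbounded subset of $\omega^\omega$ of cardinality $|\omega^\omega|=\mathfrak c=2^{\aleph_0}$, and since $\mathfrak b$ is by definition the least cardinality of an unbounded set, we conclude $\mathfrak b\leq\mathfrak c$.

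For the lower bound $\aleph_0<\mathfrak b$, I would show that every countable $S\subseteq\omega^\omega$ is bounded, so that no countable family can witness the value of $\mathfrak b$; equivalently, every unbounded set is uncountable, whence $\mathfrak b\geq\aleph_1>\aleph_0$. But the boundedness of countable sets is precisely the content of Lemma~\ref{lemma:b}: given a countable $S$, that lemma produces a (strictly increasing) $f\in\omega^\omega$ with $g<^*f$ for every $g\in S$. Since $g<^*f$ trivially implies $g\leq^*f$, this single $f$ dominates every member of $S$, so $S$ is bounded in the sense defined above.

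There is no genuine obstacle here; the only thing to keep track of is the formal mismatch between the $<^*$ that appears in Lemma~\ref{lemma:b} and the $\leq^*$ (``dominates'') used in the definition of \emph{bounded}, which is dissolved by the immediate implication $<^*\ \Rightarrow\ \leq^*$ noted in the discussion of these relations.
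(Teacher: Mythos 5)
Your proof is correct and matches the paper's intent: the paper offers no explicit proof, merely noting the corollary is an immediate consequence of Lemma~\ref{lemma:b}, and your two-step unwinding (countable sets are bounded by that lemma, hence $\mathfrak b>\aleph_0$; the unbounded set $\omega^\omega$ has size $\mathfrak c$, hence $\mathfrak b\leq\mathfrak c$) is exactly the intended argument. The remark about reconciling $<^*$ with $\leq^*$ is a nice touch of care.
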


\section{The Open Coloring Axiom}

 This section is concerned with the axiom introduced by Todorc\v{e}vi\'{c} in \cite{stevo}. We start with a definition needed to state it.
 
\begin{definition}\label{def:color}
 Given a topological space $X$ and a set $K_0\subseteq[X]^2$, we let \[K_0^\sharp:=\{(x,y)\colon\{x,y\}\in K_0\}.\] Also, $K_0$ will be called {\sl open in $X$} if $K_0^\sharp$ is an open subset of the topological product $X\times X$.
 
 We can think of the set $[X]^2$ as the complete graph on the vertex set $X$ and also $K_0$ as a coloring of the edges of $X$ in some color $0$ and its complement in the color $1$. With this image in mind, a set $p\subseteq X$ is called {\sl 0-homogeneous} if $[p]^2\subseteq K_0$ and, similarly, {\sl 1-homogeneous} if $[p]^2\cap K_0=\emptyset$.
\end{definition}

 The following basic result concerning the previous definition will come in handy later in this work.
 
\begin{lemma}\label{lemma:3}
 If $X\subseteq\mathbb R$ and $K_0\subseteq[X]^2$ is open in $X$, then
\begin{enumerate}
 \item for all $v\in X$, $$\{x\in X:\{v,x\}\in K_0\}$$ is an open subset of $X$; and
 \item a set $A\subseteq X$ is $1$-homogeneous if and only if $\cl_X(A)$ is.
\end{enumerate}
\end{lemma}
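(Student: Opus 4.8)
The plan is to work directly from the definition of the product topology on $X\times X$; in fact no special feature of $\mathbb R$ beyond its being a topological space will be used. For item (1), fix $v\in X$ and observe that the map $\iota_v\colon X\to X\times X$ given by $\iota_v(x)=(v,x)$ is continuous, since each coordinate function — the constant map $x\mapsto v$ and the identity $x\mapsto x$ — is continuous. Since $\{x\in X\colon\{v,x\}\in K_0\}=\iota_v^{-1}[K_0^\sharp]$ and $K_0^\sharp$ is open in $X\times X$ by hypothesis, the set in question is the preimage of an open set under a continuous map, hence open in $X$.

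For item (2), the direction $(\Leftarrow)$ is immediate from $A\subseteq\cl_X(A)$, which gives $[A]^2\subseteq[\cl_X(A)]^2$ and therefore $[A]^2\cap K_0\subseteq[\cl_X(A)]^2\cap K_0=\emptyset$. For $(\Rightarrow)$ I would argue by contraposition: suppose $\cl_X(A)$ is not $1$-homogeneous, so there are distinct $x,y\in\cl_X(A)$ with $\{x,y\}\in K_0$, i.e., $(x,y)\in K_0^\sharp$. Because $K_0^\sharp$ is open in the product, there is a basic open rectangle with $(x,y)\in U\times V\subseteq K_0^\sharp$, where $U$ and $V$ are open in $X$. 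Since $x\in\cl_X(A)$ and $U$ is a neighborhood of $x$, we may pick $a\in A\cap U$; similarly pick $b\in A\cap V$. Then $(a,b)\in U\times V\subseteq K_0^\sharp$, so $\{a,b\}\in K_0$.

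The one step that needs a moment's care — and hence the closest thing to an obstacle here — is checking that $a\neq b$, so that $\{a,b\}$ is a genuine element of $[A]^2$ witnessing the failure of $1$-homogeneity of $A$. This is where the bookkeeping built into Definition \ref{def:color} is used: if $a=b$, then $\{a,b\}=\{a\}$ would belong to $K_0\subseteq[X]^2$, which is impossible since $[X]^2$ consists only of two-element subsets of $X$; equivalently, $K_0^\sharp$ is disjoint from the diagonal of $X\times X$, so $(a,b)\in K_0^\sharp$ already forces $a\neq b$. Thus $\{a,b\}\in[A]^2\cap K_0$, contradicting the $1$-homogeneity of $A$, which completes the contrapositive and the proof.
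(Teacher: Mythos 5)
Your proof is correct and follows essentially the same route as the paper's: for (1) you take the preimage of $K_0^\sharp$ under the continuous section $x\mapsto(v,x)$, where the paper equivalently takes the image of $K_0^\sharp\cap(\{v\}\times X)$ under the homeomorphism $\{v\}\times X\to X$, and for (2) both arguments are the same contrapositive via a basic open rectangle $V_0\times V_1\subseteq K_0^\sharp$. Your added check that $a\neq b$ (because $K_0\subseteq[X]^2$ forces $K_0^\sharp$ to miss the diagonal) is a detail the paper leaves implicit, but it does not change the argument.
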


\begin{proof}
\begin{enumerate}
 \item Routine arguments show that the natural projection $\pi\colon\{v\}\times X\to X$ given by $\pi(v,x):=x$ is a homeomorphism, in particular an open map. Being $K_0$ open in $X$, we have that $$\{x\in X:\{v,x\}\in K_0\}=\{x\in X:(v,x)\in K_0^\sharp\}=\pi[K^\sharp\cap(\{v\}\times X)]$$ is an open set.
 \item By counterpositive, assume $\cl_X(A)$ is not $1$-homogeneous, that is, there are $x_0,x_1\in\cl_X(A)$ such that $\{x_0,x_1\}\in K_0$. Since $K_0$ is open in $X$, we can find two open neighbourhoods $V_0,V_1$ of $x_0$ and $x_1$, respectively, such that $V_0\times V_1\subseteq K_0^\sharp$. But then, by elementary arguments of closure, there exists, for each $i<2$, $y_i\in V_i\cap A$. Hence, $\{y_0,y_1\}\in K_0$, and so $A$ cannot be $1$-homogeneous.
 
 The other implication is immediate since $A\subseteq\cl_X(A)$.
\end{enumerate} 
\end{proof}

\begin{definition}\label{def:oca}
 Given a topological space $Z$, OCA($Z$) is the statement: If $X$ is a subspace of $Z$ and $K_0\subseteq[X]^2$ is open in $X$, then there is either an uncountable 0-homogeneous set $Y\subseteq X$, or there exists $\{H_n\colon n<\omega\}$ such that $X=\bigcup_{n<\omega}H_n$ and, for all $n<\omega$, $[H_n]^2\subseteq[X]^2\setminus K_0$, in other words, $X$ is the union of countably many 1-homogeneous sets.
\end{definition}

 From now on, the Open Coloring Axiom (OCA) will be the statement OCA($\mathbb R$) (the reals with their usual topology). We now proceed to extend the axiom to a broader family of spaces.
 
\begin{lemma}\label{lemma:extend_oca}
 OCA($\mathbb R$) implies OCA($Z$) for any space $Z$ which can be embedded in $\mathbb R$.
\end{lemma}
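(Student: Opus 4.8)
The plan is to pull the coloring back to the real line along the embedding, invoke OCA($\mathbb{R}$), and then transport the resulting witness back to $Z$. Fix a homeomorphic embedding $e\colon Z\to\mathbb R$ and let $X$ be an arbitrary subspace of $Z$ together with a set $K_0\subseteq[X]^2$ open in $X$. Since $e$ is injective, the restriction $e\restriction X$ is a homeomorphism from $X$ onto $X':=e[X]\subseteq\mathbb R$, and we set $$K_0':=\{\{e(x),e(y)\}\colon\{x,y\}\in K_0\}\subseteq[X']^2.$$

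First I would verify that $K_0'$ is open in $X'$, which is the only point requiring a little care. The map $(e\restriction X)\times(e\restriction X)$ is a homeomorphism from $X\times X$ onto $X'\times X'$, since a product of homeomorphisms is a homeomorphism for the respective product topologies; in particular it is an open map. Because $e$ is injective, $(K_0')^\sharp$ is exactly the image of $K_0^\sharp$ under this map, and $K_0^\sharp$ is open in $X\times X$ by hypothesis, so $(K_0')^\sharp$ is open in $X'\times X'$. By Definition~\ref{def:color} this says precisely that $K_0'$ is open in $X'$. Everything after this is bookkeeping with $e$ and $e^{-1}$, using injectivity to move pairs back and forth without ambiguity.

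Now apply OCA($\mathbb R$) to the subspace $X'\subseteq\mathbb R$ and the open coloring $K_0'$. In the first case there is an uncountable $0$-homogeneous set $Y'\subseteq X'$; put $Y:=e^{-1}[Y']\subseteq X$. Injectivity of $e$ makes $Y$ uncountable, and for any $\{x,y\}\in[Y]^2$ we have $\{e(x),e(y)\}\in[Y']^2\subseteq K_0'$, hence $\{x,y\}\in K_0$ by the definition of $K_0'$; so $Y$ is $0$-homogeneous. In the second case there is a family $\{H_n'\colon n<\omega\}$ with $X'=\bigcup_{n<\omega}H_n'$ and each $H_n'$ $1$-homogeneous; put $H_n:=e^{-1}[H_n']\subseteq X$. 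Then $X=\bigcup_{n<\omega}H_n$, and for $\{x,y\}\in[H_n]^2$ we get $\{e(x),e(y)\}\in[H_n']^2\subseteq[X']^2\setminus K_0'$, so $\{x,y\}\in[X]^2\setminus K_0$; thus each $H_n$ is $1$-homogeneous. In either case we have produced exactly the witness demanded by OCA($Z$), which completes the proof. The main (and essentially sole) obstacle is the openness transfer in the second paragraph; the rest is routine.
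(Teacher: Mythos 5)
Your proposal is correct and follows essentially the same route as the paper: push the coloring forward along the embedding, observe that $(K_0')^\sharp$ is the image of $K_0^\sharp$ under the product of the embedding with itself (a homeomorphism onto $X'\times X'$, hence an open map), apply OCA($\mathbb R$), and pull the homogeneous witnesses back. The openness transfer, which you correctly identify as the only nontrivial point, is handled the same way in the paper.
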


\begin{proof}
 Assume OCA($\mathbb R$). Fix $X\subseteq Z$ and $K_0,K_1\subseteq [X]^2$ in such a way that $K_0$ is open in $X$ and $[X]^2=K_0\cupdot K_1$.
 
 If $h\colon Z\longrightarrow\mathbb R$ is a topological embedding, then $H:=h\times h$ is an embedding of $Z\times Z$ in $\mathbb R\times\mathbb R$. For $i<2$, we define $L_i:=\{h[a]\colon a\in K_i\}$. Since $[X]^2=K_0\cupdot K_1$, we obtain that $[h[X]]^2=L_0\cupdot L_1$.
 
 We want $L_0$ to be open in $h[X]$, which, according to our definition, means we want for $L_0^\sharp$ to be open in $h[X]\times h[X]$. But notice that $H[X\times X]=h[X]\times h[X]$ and the direct image of the open set $K_0^\sharp$ under $H$ is precisely $L_0^\sharp$. Since $H$ is an embedding, $L_0^\sharp$ must in fact be open, which means that, by OCA, we have two possible cases.
 
 First, there is an uncountable $Y\subseteq h[X]$ such that $[Y]^2\subseteq L_0$, which, by the bijectivity of $h$, implies that $h^{-1}[Y]$ is uncountable and $[h[Y]]^2\subseteq K_0$. Or, second, we can write $h[X]=\bigcup_{n<\omega}A_n$, where each set $A_n$ satisfies $[A_n]^2\subseteq L_1$. In this situation, it suffices to observe that $X=\bigcup_{n<\omega}h^{-1}[A_n]$ and clearly $[h^{-1}[A_n]]^2\subseteq K_1$, for every $n<\omega$.
\end{proof}

\section{OCA decides $\mathfrak b$}\label{section:oca_d}

 One of the first historic applications of OCA is that if we assume this axiom we can prove the equality $\frak b=\aleph_2$. For this, we use Lemma~\ref{lemma:extend_oca} and the fact that the topological product $\omega^\omega$ is homeomorphic to the irrationals endowed with the relative topology from $\mathbb R$, as proved in \cite[Teorema~1.17, p.~11]{dania}.
 
 We will abbreviate $X\times X$ as $X^2$ to save on notation.
 
\begin{lemma}\label{lemma:aleph1<b}
 Under OCA, every subset of $\omega^\omega$ of size $\aleph_1$ is bounded.
\end{lemma}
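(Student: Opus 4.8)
The plan is to argue by contradiction, using the standard reduction to refuting $\mathfrak{b}=\aleph_1$: since $\aleph_0<\mathfrak{b}$ (the Corollary above), either $\mathfrak{b}>\aleph_1$, in which case every set of size $\aleph_1$ is bounded and we are done, or $\mathfrak{b}=\aleph_1$, in which case there is an unbounded set of size $\aleph_1$ and the lemma is asserting something false unless OCA itself is contradicted. So I would assume $\mathfrak{b}=\aleph_1$ and derive a contradiction from OCA.

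First I would replace a witnessing unbounded family by a convenient one. Using Lemma~\ref{lemma:b} at limit stages, I would recursively build a sequence $\langle f_\alpha:\alpha<\omega_1\rangle$ of strictly increasing functions in $\omega^\omega$ with $f_\alpha<^*f_\beta$ for $\alpha<\beta$ and with $A:=\{f_\alpha:\alpha<\omega_1\}$ unbounded: at stage $\alpha$ pick a strictly increasing $f_\alpha$ dominating $<^*$ the countably many earlier $f_\beta$'s (possible by Lemma~\ref{lemma:b}) and also dominating the $\alpha$-th member of some fixed unbounded $\aleph_1$-sized family. Then the $f_\alpha$ are pairwise distinct, so $|A|=\aleph_1$; moreover $A$ has \emph{no uncountable bounded subset}, since any uncountable $B\subseteq A$ is $<^*$-cofinal in the sequence, so a function dominating $B$ would dominate every $f_\alpha$. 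Note also that $\alpha\mapsto f_\alpha$ is an order isomorphism from $(\omega_1,\in)$ onto $(A,<^*)$, so $<^*$ well-orders $A$ in type $\omega_1$. By Lemma~\ref{lemma:extend_oca} and the homeomorphism between $\omega^\omega$ and the irrational reals, OCA holds for the space $\omega^\omega$ and hence for its subspace $A$.

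Next I would take the coloring $K_0:=\{\{f,g\}\in[A]^2:\exists m\,(f(m)<g(m))\text{ and }\exists n\,(g(n)<f(n))\}$, so that $\{f,g\}\in K_0$ exactly when $f$ and $g$ are incomparable in the pointwise order. Then $K_0^\sharp$ is the intersection of the two open sets $\{(f,g):\exists m\,f(m)<g(m)\}$ and $\{(f,g):\exists n\,g(n)<f(n)\}$ of $(\omega^\omega)^2$, so $K_0$ is open in $A$ and OCA applies. A $1$-homogeneous $H\subseteq A$ is precisely a chain in the pointwise order, and on such an $H$ the pointwise order coincides with $<^*$: for distinct $f,g\in H$, if $f\le g$ pointwise then $f\le^*g$, so $g\not<^*f$, so $f<^*g$ as $<^*$ linearly orders $A$; conversely if $f<^*g$ then $g\not\le f$ pointwise (else $g\le^*f<^*g$), so $f\le g$ pointwise. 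Since $<^*$ well-orders $A$ in type $\omega_1$, an uncountable such $H$ would contain a pointwise-increasing $\omega_1$-sequence; but each coordinate of a pointwise-increasing $\omega_1$-sequence is a nondecreasing $\omega_1$-sequence of natural numbers, hence eventually constant, so the sequence itself is eventually constant — impossible since its terms are distinct. Thus every $1$-homogeneous subset of $A$ is countable, so the second alternative of OCA, which writes $A$ as a countable union of $1$-homogeneous sets, would force $|A|\le\aleph_0$, contradicting $|A|=\aleph_1$.

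Hence OCA must yield an uncountable $0$-homogeneous $Y\subseteq A$, i.e.\ an uncountable pointwise \emph{antichain} that is also $<^*$-well-ordered in type $\omega_1$, say $Y=\{g_\xi:\xi<\omega_1\}$ with $g_\xi<^*g_\eta$ for $\xi<\eta$. The hard part — indeed the heart of the whole argument — is to show that no such $Y$ exists; this is the long technical lemma of Todorc\v{e}vi\'{c} referred to in the paper. The idea is a repeated stabilization argument on the finite data attached to pairs: for $\xi<\eta$ set $P(\xi,\eta):=\{n:g_\xi(n)>g_\eta(n)\}$, which is nonempty (by $0$-homogeneity) and finite (since $g_\xi<^*g_\eta$); then for uncountably many $\xi$ one stabilizes the number $\max P(\xi,\eta)$ and the initial segment $g_\eta\restriction(\max P(\xi,\eta)+1)$ on uncountable sets of $\eta$'s, and feeds the surviving configuration into the same ``pointwise-increasing $\omega_1$-sequences are eventually constant'' phenomenon used above to obtain a contradiction. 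I expect the delicate point to be organizing the uncountable witness sets so that they can be threaded together coherently, and I would be prepared for this step to require an elementary-submodel reflection argument rather than bare pigeonhole.
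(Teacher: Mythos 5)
Your setup is correct and runs parallel to the paper's up to the point where OCA is invoked: the reduction to refuting $\mathfrak b=\aleph_1$, the construction of a $<^*$-increasing $\omega_1$-chain of strictly increasing functions, the coloring by pointwise incomparability (the paper's $K_0$ is the same set, written via the $<^*$-increasing enumeration), the openness of $K_0$, and the refutation of the second OCA alternative (your ``each coordinate is a nondecreasing $\omega_1$-sequence of naturals, hence eventually constant'' argument is the same phenomenon the paper captures with its strictly increasing chain of sets $S_\alpha\subseteq\omega\times\omega$). All of that is fine.

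The genuine gap is the first alternative, which is the heart of the proof, and you have not supplied it. You defer it to ``the long technical lemma of Todorc\v{e}vi\'{c} referred to in the paper,'' but that lemma (Lemma~\ref{lemma:todorcevic}) concerns the ccc-ness of $\mathbb P_Y(K_0)$ and plays no role here; the missing step is internal to this very proof. What must be shown is that an uncountable $0$-homogeneous $Y\subseteq A$ is necessarily \emph{bounded} (which contradicts your arrangement that $A$ has no uncountable bounded subset). The paper's device for this is: enumerate $Y$ as a $<^*$-increasing $\omega_1$-sequence $\{g_\alpha\}$; for each $t\in\omega^{<\omega}$ with $[t]\cap Y\neq\emptyset$ fix a witness $\alpha_t$ with $t\subseteq g_{\alpha_t}$ and take $\gamma$ above all the $\alpha_t$; stabilize an uncountable $Z$ on a common initial segment and on $g_\gamma(k)<g_\beta(k)$ for $k\geq k_0$; and then prove each coordinate set $\{g_\beta(n):\beta\in Z\}$ is finite by playing the reference function $g_\gamma$ and the witnesses $\alpha_t$ against $0$-homogeneity and the strict monotonicity of the $g$'s. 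Your sketch (``stabilize $\max P(\xi,\eta)$ and the initial segment, then reuse the eventually-constant phenomenon'') contains none of this: it never introduces the dominating reference function $g_\gamma$ or the witnesses $\alpha_t$, never uses that $A$ was chosen unbounded, and aims at a bare combinatorial contradiction from the finite sets $P(\xi,\eta)$ alone, which is not how the argument closes (the location of the single order-reversal point varies with the pair, and pigeonholing it away is exactly what fails without the $\alpha_t$ trick). No elementary-submodel reflection is needed; what is needed is this specific elementary construction, and it is absent from your proposal.
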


\begin{proof}
 Let $X_0\in[\omega^\omega]^{\aleph_1}$ be arbitrary. Enumerate $X_0=\{e_\alpha\colon\alpha<\omega_1\}$. For the sake of simplicity, we will construct a set $X=\{f_\alpha\colon\alpha<\omega_1\}\subseteq\omega^\omega$ in such a way that, for any $\alpha<\beta<\omega_1$,
\begin{enumerate}[label=\alph*)]
 \item $e_\alpha <^*f_\alpha$,
 \item $f_\alpha <^*f_\beta$, and
 \item $f_\alpha$ is strictly increasing.
\end{enumerate}

 By (a) and the transitivity of $<^*$, if we show that $X$ is bounded, then $X_0$ would be bounded as well and so we will be done.
 
 To construct $X$ we proceed by transfinite recursion: assume that for some $\beta<\omega_1$, $\{f_\alpha\colon\alpha<\beta\}$ has been constructed in such a way that (a), (b) and (c) hold. Notice that this family, even when we add $e_\beta$, is countable, so by Lemma~\ref{lemma:b}, there's a strictly increasing function $f_\beta$ such that for all $\alpha<\beta$, $f_\alpha<^*f_\beta$ and $e_\beta<^*f_\beta$. This completes our recursion.
 
 We assert that, under the assumption of OCA, $X$ is indeed bounded. Note that, according to (b), $|X|=\aleph_1$.
 
 Define $$K_0:=\left\{\{f_\alpha,f_\beta\}\colon \alpha<\beta\land\exists k<\omega\,(f_\alpha(k)>f_\beta(k))\right\}$$ and $K_1$ as its complement in $[X]^2$. Let us show that $K_0$ is open in $X$, in other words, that $K_0^\sharp$ is open in $X^2$.
 
 Take $\alpha<\beta<\omega_1$ and $k<\omega$ such that $f_\beta(k)<f_\alpha(k)$. By item (b) above, we have that there must be $m<\omega$ such that, for all $i\geq m$, $f_\alpha(i)<f_\beta(i)$ (in particular, $k<m$). Then, the basic open sets $U_\alpha:=[f_\alpha\restriction(m+1)]$ and $U_\beta:=[f_\beta\restriction(m+1)]$ clearly contain $f_\alpha$ and $f_\beta$, respectively.
 
 To verify that $(U_\alpha\times U_\beta)\cap X^2\subseteq K_0^\sharp$, consider $(p,q)\in(U_\alpha\times U_\beta)\cap X^2$. Then, for some $\xi,\eta<\omega_1$, $p=f_\xi$ and $q=f_\eta$. If $\xi<\eta$, then use the facts $k<m$ and $(f_\xi,f_\eta)\in U_\alpha\times U_\beta$ to deduce that $f_\eta(k)=f_\beta(k)<f_\alpha(k)=f_\xi(k)$; as a consequence, $\{f_\eta,f_\xi\}\in K_0$, which implies $(p,q)\in K_0^\sharp$. When $\eta<\xi$ one gets $f_\eta(m)=f_\beta(m)>f_\alpha(m)=f_\xi(m)$, and hence $\{f_\xi,f_\eta\}\in K_0$; therefore, $(p,q)\in K_0^\sharp$.
 
 In a similar fashion, $(f_\beta,f_\alpha)\in(U_\beta\times U_\alpha)\cap X^2\subseteq K_0^\sharp$ and, as a consequence, $K_0^\sharp$ is, indeed, an open subset of $X^2$.
 
 Seeking a contradiction, assume that $X=\bigcup_{n<\omega}H_n$, where $[H_n]^2\subseteq K_1$ for every $n<\omega$. Since $|X|=\aleph_1$, there exists $n<\omega$ with $|H_n|=\aleph_1$. Take the uncountable set of indeces $E:=\{\alpha<\omega_1\colon f_\alpha\in H_n\}$ and define, for every $\alpha\in E$, $S_\alpha:=\{(m,k)\in\omega\times\omega\colon m\leq f_\alpha(k)\}$.
 
 Suppose that $\alpha,\beta\in E$ satisfy $\alpha<\beta$. As a consequence of condition (b) one gets $\{f_\alpha,f_\beta\}\in[H_n]^2\subseteq K_1$. Our definition of $K_0$ guarantees that $f_\beta(i)\leq f_\alpha(i)$, for all $i<\omega$, and consequently, $S_\alpha\subseteq S_\beta$. Moreover, since $f_\alpha<^*f_\beta$, for some $k<\omega$, we have $f_\alpha(k)<f_\beta(k)$, and so $(f_\beta(k),k)\in S_\beta\setminus S_\alpha$. This argument shows that $S_\alpha$ is a proper subset of $S_\beta$.
 
 To complete this part of our argument, use the fact that $E$ is an uncountable subset of $\omega_1$ to obtain $\varphi\colon\omega_1\to E$, an order isomorphism. According to the previous paragraph, $\{ S_{\varphi(\alpha+1)}\setminus S_{\varphi(\alpha)}\colon\alpha<\omega_1\}$ is an uncountable family of non-empty pairwise disjoint subsets of the countable set $\omega\times\omega$, an absurdity.
 
 By OCA, we must have $Y\in[X]^{\aleph_1}$ with $[Y]^2\subseteq K_0$. We assert that $Y$ is bounded.
 
 Since $I:=\{\alpha<\omega_1\colon f_\alpha\in Y\}$ is uncountable, there is an order isomorphism $r:\omega_1\longrightarrow I$. Hence, by letting $g_\alpha:=f_{r(\alpha)}$, for each $\alpha<\omega_1$, one obtains an enumeration $\{g_\alpha\colon\alpha<\omega_1\}$ of $Y$ in such a way that $g_\alpha<^*g_\beta$, whenever $\alpha<\beta<\omega_1$.
 
 For every $t\in\omega^{<\omega}$ such that $[t]\cap Y\neq\emptyset$, choose some $\alpha_t<\omega_1$ that satisfies $t\subseteq g_{\alpha_t}$. Thus, $$\gamma:=\sup\{\alpha_t\colon(t\in\omega^{<\omega})\land([t]\cap Y\neq\emptyset)\}+1<\omega_1.$$ Next, for each integer $n$, set $A_n:=\{\beta<\omega_1\colon\forall k\in\omega\setminus n\,(g_\gamma(k)<g_\beta(k))\}$. The way we enumerated $Y$ guarantees that $\omega_1\setminus\gamma\subseteq\bigcup_{n<\omega}A_n$ and therefore, for some $k_0<\omega$, $A_{k_0}$ is uncountable. Now, the map from $A_{k_0}$ into $\omega^{<\omega}$ given by $\beta\mapsto g_\beta\restriction k_0$ has an uncountable fiber $Z$. To summarize, we have found an uncountable set $Z\subseteq\omega_1\setminus\gamma$ such that
\begin{align}\label{obs:Z}
 \text{for every }\xi,\eta\in Z\text{ and }k\in\omega\setminus k_0,\quad g_\gamma(k)<g_\xi(k)\text{ and }g_\xi\restriction k_0=g_{\eta}\restriction k_0.
\end{align}

 What remains of this proof is to show that $\{g_\beta\colon\beta\in Z\}$ is bounded by some function $g$. Note that, were this the case, we could find, for each $\alpha<\omega_1$, some $\beta\in Z\setminus(\alpha+1)$ ($Z$ is uncountable) such that $f_\alpha<^*g_\beta\leq^*g$ and so $X$ would be bounded as well.

 For all $n\geq k_0$, we define the sets $G_n:=\{g_\beta(n)\colon\beta\in Z\}$. We will prove, by contradiction, that all $G_n$ are finite. Thus let $m\geq k_0$ be the smallest natural number such that $G_m$ is infinite. By the choice of $m$, we deduce that $A:=\{g_\beta\restriction m\colon\beta\in Z\}$ is finite and $B:=\{g_\beta\restriction(m+1)\colon\beta\in Z\}$ is infinite. Therefore, the function from $B$ to $A$ given by $s\mapsto s\restriction m$ has an infinite fiber, which implies we can find $t\in\omega^m$ and $W\subseteq Z$ such that for all $\beta\in W$, $g_\beta\restriction m=t$ (in particular, $g_\beta\in[t]\cap Y$ and so, $\alpha_t$ exists) and $\{g_\beta(m)\colon\beta\in W\}$ is infinite.
 
 By our choice of $\gamma$, $\alpha:=\alpha_t<\gamma$ and thus $g_\alpha<^*g_\gamma$. Then there's some $k_1\geq m$ such that for all $k\geq k_1$, $g_\alpha(k)<g_\gamma(k)$. We can then pick $\beta\in W$ such that $g_\beta(m)\geq g_\gamma(k_1)$. Since $\beta\in W\subseteq Z\subseteq\omega_1\setminus\gamma$, $\alpha<\gamma\leq\beta$. Now, the fact that $r$ is an isomorphism implies that $\xi:=r(\alpha)<r(\beta)=:\eta$ and $\{g_\alpha,g_\beta\}=\{f_\xi,f_\eta\}\in[Y]^2\subseteq K_0$. Then there must be some $k<\omega$ such that $g_\beta(k)<g_\alpha(k)$.
 
 If $k<m$, then our choice of $t$, the equality $\alpha=\alpha_t$ and $\beta\in W$ imply that $g_\alpha(k)=t(k)=g_\beta(k)$, a contradiction. So, $m\leq k$. If $k\leq k_1$, by the way we picked $k_1$, and the fact that both $g_\alpha$ and $g_\beta$ are strictly increasing, we obtain $g_\alpha(k)\leq g_\alpha(k_1)<g_\gamma(k_1)\leq g_\beta(m)\leq g_\beta(k)$, another absurdity. Thus $k_1<k$. Since $k_0\leq m\leq k_1<k$ and $\beta\in W\subseteq Z$, (\ref{obs:Z}) implies that $g_\beta(k)<g_\alpha(k)<g_\gamma(k)<g_\beta(k)$. Once more, a contradiction.

 Therefore, all sets $G_n$ must be finite, and then $\{g_\beta\colon\beta\in Z\}$ is bounded by the function $$n\mapsto\begin{cases}0,&n<k_0\\\max G_n+1,&n\geq k_0.\end{cases}$$
\end{proof}

 We continue the section by introducing the concept of {\sl gaps in $\omega^\omega$} and some of their basic properties.

\begin{definition}
 Let $\kappa$ and $\lambda$ be regular cardinals. A {\sl $(\kappa,\lambda)$-gap in $\omega^\omega$} is a pair of sequences $\{f_\alpha\colon\alpha<\kappa\}\subseteq\omega^\omega$ and $\{g_\beta\colon\beta<\lambda\}\subseteq\omega^\omega$ such that for all $\alpha_0<\alpha_1<\kappa$ and $\beta_0<\beta_1<\lambda$, $$f_{\alpha_0}<^*f_{\alpha_1}<^*g_{\beta_1}<^*g_{\beta_0};$$ and there is no $h\in\omega^\omega$ satisfying that for all $\alpha<\kappa$ and $\beta<\lambda$, $f_\alpha<^*h<^*g_\beta$.
\end{definition}

 Intuitively, regarding the previous definition, the first sequence is strictly increasing and always below the second which is strictly decreasing; but the pair of sequences is ``tight'' in the sense one cannot fit a function that lies in between them, with respect to the strict pre-order $<^*$. Also, the restriction of both cardinals being regular is mostly inessential, since if we remove this restriction, every $(\kappa,\lambda)$-gap contains a $(\cf(\kappa),\cf(\lambda))$-gap that is ``cofinal.'' This observation is actually very useful. In most of the proofs that follow, the general idea is to construct such a cofinal gap.

 One very basic result concerning the existence of gaps can be stated as follows. Let us define the difference of two functions $f$ and $g$ in $\omega^\omega$ by $$(f-g)(n):=\max\{f(n)-g(n),0\}\text{ for every }n<\omega.$$

\begin{proposition}\label{prop:gaps_sim}
 If $\kappa$ and $\lambda$ are regular cardinals, then there is a $(\kappa,\lambda)$-gap if and only if there is a $(\lambda,\kappa)$-gap.
\end{proposition}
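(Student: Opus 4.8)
The plan is to establish the equivalence by a ``reflection'' trick: given a gap, one flips all its functions through the topmost one. Since the statement itself is symmetric in $\kappa$ and $\lambda$, it suffices to prove one implication, so assume $(\{f_\alpha\colon\alpha<\kappa\},\{g_\beta\colon\beta<\lambda\})$ is a $(\kappa,\lambda)$-gap and let us produce a $(\lambda,\kappa)$-gap.

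First I would record the observation that $g_0$ eventually dominates every function in sight: unwinding the defining chain $f_{\alpha_0}<^*f_{\alpha_1}<^*g_{\beta_1}<^*g_{\beta_0}$ together with transitivity of $<^*$ and $\leq^*$ yields $f_\alpha<^*g_0$ for every $\alpha<\kappa$ and $g_\beta\leq^* g_0$ for every $\beta<\lambda$. Consequently, for each such $\alpha$ and $\beta$ one has $(g_0-f_\alpha)(n)=g_0(n)-f_\alpha(n)$ and $(g_0-g_\beta)(n)=g_0(n)-g_\beta(n)$ for all but finitely many $n$; that is, on a cofinite set the truncated difference $g_0-(\cdot)$ coincides with honest subtraction. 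Keeping track of where this coincidence might fail is really the only point requiring care in the whole argument.

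Next, set $\widetilde f_\alpha:=g_0-f_\alpha$ for $\alpha<\kappa$ and $\widetilde g_\beta:=g_0-g_\beta$ for $\beta<\lambda$, and I claim that $(\{\widetilde g_\beta\colon\beta<\lambda\},\{\widetilde f_\alpha\colon\alpha<\kappa\})$ is a $(\lambda,\kappa)$-gap. Each interleaving inequality reduces, on a cofinite set, to one among the original functions: $\widetilde g_{\beta_0}<^*\widetilde g_{\beta_1}$ follows from $g_{\beta_1}<^*g_{\beta_0}$, $\widetilde f_{\alpha_1}<^*\widetilde f_{\alpha_0}$ follows from $f_{\alpha_0}<^*f_{\alpha_1}$, and $\widetilde g_\beta<^*\widetilde f_\alpha$ follows from $f_\alpha<^*g_\beta$, always via the previous paragraph. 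For the non-interpolation clause, suppose toward a contradiction that some $h\in\omega^\omega$ satisfies $\widetilde g_\beta<^*h<^*\widetilde f_\alpha$ for all $\alpha<\kappa$ and $\beta<\lambda$. Since $\widetilde f_\alpha(n)\leq g_0(n)$ for every $n$, the relation $h<^*\widetilde f_\alpha$ forces $h<^*g_0$, so $(g_0-h)(n)=g_0(n)-h(n)$ eventually; and then $\widetilde g_\beta<^*h$ rearranges to $g_0-h<^*g_\beta$, while $h<^*\widetilde f_\alpha$ rearranges to $f_\alpha<^*g_0-h$, for all $\alpha$ and $\beta$. Thus $g_0-h$ would interpolate the original $(\kappa,\lambda)$-gap, contradicting that it is a gap. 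This proves the claim, and with it the proposition.

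I expect no serious obstacle: the argument is a short computation once the reflection is chosen correctly, and the only thing one must genuinely watch is the truncation built into $f-g$ — which is exactly why one reflects through $g_0$, a function that eventually dominates everything, rather than through an arbitrary member of the gap.
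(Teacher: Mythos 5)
Your proof is correct and takes essentially the same route as the paper: reflect the gap through $g_0$ by forming the truncated differences $g_0-g_\beta$ and $g_0-f_\alpha$, check the interleaving inequalities, and observe that any interpolant $h$ would reflect back to an interpolant $g_0-h$ for the original gap. If anything, you are more careful than the paper in verifying that the truncation built into $f-g$ is eventually harmless.
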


\begin{proof}
 For the first implication, consider $\{f_\alpha\colon\alpha<\kappa\}$ and $\{g_\beta\colon\beta<\lambda\}$ satisfying the definition of gap. Then for all $\beta_0<\beta_1<\lambda$ and $\alpha_0<\alpha_1<\kappa$, $$g_0-g_{\beta_0}<^*g_0-g_{\beta_1}<^*g_0-f_{\alpha_0}<^*g_0-f_{\alpha_1}.$$ Moreover, if there were a function $h$ such that for all $\beta<\lambda$ and $\alpha<\kappa$, $g_0-g_\beta<^*h<^*g_0-f_\alpha$, then $f_\alpha<^*g_0-h<^*g_\beta$, which would contradict our hypothesis. Hence, $\{g_0-g_\beta\colon\beta<\lambda\}$ and $\{g_0-f_\alpha\colon\alpha<\kappa\}$ form a $(\lambda,\kappa)$-gap.
 
 A routine argument shows that the remaining implication is also true.
\end{proof}

 Given any two arbitrary functions $f$ and $g$ in $\omega^\omega$, let us use the symbol $f<g$ when for all $n<\omega$, $f(n)<g(n)$; similarly, $f\leq g$ will be short for $f(n)\leq g(n)$ for every $n<\omega$.

\begin{lemma}\label{lemma:oca_gaps}
 Under OCA, there are no $(\kappa,\lambda)$-gaps in $\omega^\omega$, where $\kappa$ and $\lambda$ are regular uncountable cardinals and $\kappa>\omega_1$.
\end{lemma}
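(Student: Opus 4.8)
The plan is to argue by contradiction. Suppose $\{f_\alpha\colon\alpha<\kappa\}$ and $\{g_\beta\colon\beta<\lambda\}$ witness a $(\kappa,\lambda)$-gap with $\kappa,\lambda$ regular uncountable and $\kappa>\omega_1$. Since $\omega^\omega$ embeds into $\mathbb{R}$, Lemma~\ref{lemma:extend_oca} lets us apply OCA to every subspace of $\omega^\omega$; so the idea is to code the gap as such a subspace, equip it with an open coloring, and run OCA. By Proposition~\ref{prop:gaps_sim} we may replace the gap by its reverse $(\lambda,\kappa)$-gap whenever convenient; I first carry the argument out in the cleanest case, $\kappa=\lambda$, indicating at the end the extra care the case $\kappa\neq\lambda$ requires.

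Assume $\kappa=\lambda$. Code each $\alpha<\kappa$ by the point $h_\alpha\in\omega^\omega$ interleaving $f_\alpha$ and $g_\alpha$ (say $h_\alpha(2n)=f_\alpha(n)$ and $h_\alpha(2n+1)=g_\alpha(n)$), let $X:=\{h_\alpha\colon\alpha<\kappa\}$ (the $h_\alpha$ are pairwise distinct because the $f_\alpha$ are), and put $\{h_\alpha,h_{\alpha'}\}\in K_0$ exactly when there is a coordinate $n$, at least as large as the splitting level of $h_\alpha$ and $h_{\alpha'}$, at which the $f$-part of one meets or overtakes the $g$-part of the other, i.e. $f_\alpha(n)\geq g_{\alpha'}(n)$ or $f_{\alpha'}(n)\geq g_\alpha(n)$. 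From the definition of a gap, $f_\alpha<^*g_\beta$ for all $\alpha,\beta<\kappa$, so such overtakings can occur only below finitely many coordinates; and since both ``being past the splitting level'' and ``the value at a given coordinate'' are decided by finite initial segments, $K_0^\sharp$ is open in $X^2$. Hence OCA applies to $X$ with this $K_0$.

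Suppose OCA gives a cover $X=\bigcup_{n<\omega}H_n$ into $1$-homogeneous pieces. Since $\kappa$ is regular uncountable, $\cf(\kappa)>\omega$, so some $H_n$ is indexed by a set $I\subseteq\kappa$ that is cofinal in $\kappa$. Being $1$-homogeneous forces, for all $\alpha,\alpha'\in I$, that no overtaking happens past the splitting level of $h_\alpha,h_{\alpha'}$; since these points agree below that level, and since $f_{\alpha'}<g_{\alpha'}$ past finitely many further coordinates, this gives $f_\alpha\leq^*g_{\alpha'}$ for all $\alpha,\alpha'\in I$. Fixing one index of $I$ makes $h(n):=\sup_{\alpha\in I}f_\alpha(n)$ finite for every $n$, and then a routine pointwise-supremum computation --- using cofinality of $I$ in $\kappa$, the monotonicity of the sequences $(f_\alpha)$ and $(g_\beta)$, and the fact that $<^*$ absorbs finite discrepancies --- produces a function $h$ with $f_\alpha<^*h<^*g_\beta$ for all $\alpha,\beta<\kappa$, contradicting the gap. (Note this half uses only $\cf(\kappa)>\omega$, not $\kappa>\omega_1$.)

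Therefore OCA must instead yield an uncountable $0$-homogeneous set $Y=\{h_\alpha\colon\alpha\in I\}$, and ruling this out is the main obstacle --- and it is here that the hypothesis $\kappa>\omega_1$ has to enter essentially, since for $\kappa=\omega_1$ the lemma is false (Hausdorff gaps exist in ZFC, and for such a gap OCA is simply satisfied by this $0$-homogeneous alternative). The route I would try: shrink $Y$ to size $\aleph_1$; apply Lemma~\ref{lemma:aleph1<b} once to dominate the $f_\alpha$ that occur by a single function $F$, and once more --- to the $<^*$-increasing family $\{g_0-g_\beta\}$ restricted to the indices that occur --- to trap the corresponding $g_\beta$ from below by a single function $g_0-K$; then run a $\Delta$-system / pigeonhole argument on the relevant finite initial segments (and on the finitely many thresholds controlling the various $<^*$-relations) so that every $0$-homogeneous overtaking witness is forced past one common coordinate. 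Beyond that coordinate, the $0$-homogeneity of the uncountable $Y$, now squeezed between $F$ and $g_0-K$, should manufacture an uncountable family of pairwise disjoint nonempty subsets of a fixed countable set, which is the same kind of absurdity produced at the end of the proof of Lemma~\ref{lemma:aleph1<b}. Calibrating $K_0$ so that this last extraction really goes through, together with adapting the whole construction to the case $\kappa\neq\lambda$ --- where the gap has one side strictly longer than the other and the interleaving must be performed against a fixed dominating function obtained by applying Lemma~\ref{lemma:aleph1<b} to the shorter side --- are the delicate steps I expect to absorb most of the effort.
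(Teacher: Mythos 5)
Your overall architecture matches the paper's (argue by contradiction, code the gap as a subspace of a product of copies of $\omega^\omega$, color pairs by the existence of an ``overtaking'' coordinate, and split into the two OCA alternatives), and your treatment of the $\sigma$-$1$-homogeneous alternative for $\kappa=\lambda$ is essentially correct: the splitting-level restriction in your coloring forces $f_\alpha\leq\max(f_{\alpha_0},g_{\alpha_0})$ pointwise for every $\alpha$ in a cofinal index set (agreement below the split, strict inequality at or above it), so the pointwise supremum is finite and separates the gap --- note it is this pointwise bound, not the bare $f_\alpha\leq^*g_{\alpha_0}$ with $\alpha$-dependent thresholds, that makes $h$ well defined. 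But the part you yourself flag as the crux --- ruling out an uncountable $0$-homogeneous set --- is where the proposal has a genuine gap, and the route you sketch cannot work as stated. You correctly observe that $\kappa>\omega_1$ must enter essentially in this alternative (for $\kappa=\lambda=\omega_1$ Hausdorff gaps exist in ZFC and OCA simply realizes the $0$-homogeneous option), yet the tools you propose to deploy there --- two applications of Lemma~\ref{lemma:aleph1<b}, giving $F$ dominating the occurring $f_\alpha$ and $g_0-K$ lying $\leq^*$-below the occurring $g_\beta$ --- apply verbatim to an $\aleph_1$-sized $0$-homogeneous set extracted from an $(\omega_1,\omega_1)$-gap, so any contradiction they produced would also refute Hausdorff gaps under OCA. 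Concretely, the two bounds do not combine into a single separator: nothing forces $F\leq^*g_0-K$, so the set is not actually ``squeezed,'' and the pairwise-disjoint-subsets-of-$\omega\times\omega$ absurdity does not materialize.

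The missing idea is precisely where the paper spends the hypothesis $\kappa>\omega_1$: after shrinking the $0$-homogeneous set to size $\aleph_1$, the set of $f$-indices occurring in it has size $\aleph_1<\kappa=\cf(\kappa)$ and is therefore bounded by some $\delta<\kappa$, so the gap itself supplies a separator $f_\delta$ with $f_\alpha<^*f_\delta<^*g_{\alpha'}$ for all occurring indices $\alpha,\alpha'$. A countable stabilization (uniformizing the $<^*$-thresholds and the finite initial segments on an uncountable subset, exactly as in the construction of $Z$ in the proof of Lemma~\ref{lemma:aleph1<b}) then exhibits two members of the $0$-homogeneous set admitting no overtaking at all, a contradiction. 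Your $\kappa\neq\lambda$ reduction is also not workable as described: an interleaving coding needs a bijective pairing of the two sides, and ``applying Lemma~\ref{lemma:aleph1<b} to the shorter side'' is neither available when $\lambda>\aleph_1$ nor does it produce such a pairing. The paper sidesteps this by letting $X$ consist of pairs $(f_\alpha,g_\beta)$ with $\beta$ ranging over a $\lambda$-sized set $S_\alpha$ on which $f_\alpha<g_\beta$ pointwise (arranged beforehand by a finite shift of the whole gap); that device simultaneously handles $\kappa\neq\lambda$ and plays the role of your splitting-level restriction.
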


\begin{proof}
 Let $\kappa>\omega_1$ and $\lambda$ be regular uncountable cardinals. By Proposition~\ref{prop:gaps_sim}, we can assume, without losing generality, that $\kappa\geq\lambda$. We will first reach a contradiction by assuming that there is a gap of the form $\{f_\alpha\colon\alpha<\kappa\}$ and $\{g_\beta\colon\beta<\lambda\}$ which satisfies \begin{align}\label{obs:lambda}\text{for all }\alpha<\kappa,\text{ the set }S_\alpha:=\{\beta<\lambda\colon f_\alpha<g_\beta\}\text{ has size }\lambda.\end{align}
 
 Define $X:=\{(f_\alpha,g_\beta)\colon\alpha<\kappa\wedge\beta\in S_\alpha\}$. We think of $X$ as a subspace of $\omega^\omega\times\omega^\omega=(\omega^\omega)^2$. Since $(\omega^\omega)^2$ is homeomorphic to $\omega^{\omega\times 2}$, we apply Lemma~\ref{lemma:extend_oca} to deduce that OCA($X$) holds.
 
 If $x\in X$, denote $x=(x_0,x_1)$ for the sake of notational economy. Consider then the coloring given by $$K_0:=\{\{x,y\}\subseteq X\colon\exists k<\omega\,(x_0(k)>y_1(k)\vee x_1(k)<y_0(k))\}.$$ If we take $\{x,y\}\in K_0$ and fix $k<\omega$ as in the property that defines $K_0$, then, the open sets $U:=[x_0\restriction(k+1)]\times[x_1\restriction(k+1)]$ and $V:=[y_0\restriction(k+1)]\times[y_1\restriction(k+1)]$, by routine arguments, satisfy $(x,y)\in X^2\cap(U\times V)\subseteq K_0^\sharp$, proving that $K_0$ is open. By OCA, we have two possibilities which we discuss in detail.
 
 First, consider the case when $X=\bigcup_{n<\omega}H_n$, where for all $n<\omega$, $[H_n]^2\subseteq [X]^2\setminus K_0$. For every $\alpha<\kappa$, we define $\varphi_\alpha\colon S_\alpha\to\omega$ given by $$\varphi_\alpha(\beta):=\min\{n<\omega\colon(f_\alpha,g_\beta)\in H_n\}\text{ for each }\beta\in S_\alpha.$$ By the regularity of the uncountable cardinal $\lambda$, there exists $n_\alpha<\omega$ such that $T_\alpha:=\varphi_\alpha^{-1}[\{n_\alpha\}]$ has size $\lambda$.
 
 Similarly (recall that $\kappa$ is also a regular and uncountable cardinal), the function $\varphi\colon\kappa\to\omega$ given by $\varphi(\alpha):=n_\alpha$ has some fiber $A:=\varphi^{-1}[\{n\}]$ of size $\kappa$. By construction, for all $\alpha\in A$ and $\beta\in T_\alpha$, $(f_\alpha,g_\beta)\in H_n$.
 
 Next, fix $\gamma\in A$ and set $B:=T_\gamma$. We claim that for all $\alpha\in A$ and $\beta\in B$, $f_\alpha\leq g_\beta$. Indeed, pick $\delta\in T_\alpha\setminus\{\beta\}$ and notice that $\{(f_\alpha,g_\delta),(f_\gamma,g_\beta)\}\in[H_n]^2\subseteq[X]^2\setminus K_0$. By the definition of $K_0$, we have that for all $k<\omega$, $f_\alpha(k)\leq g_\beta(k)$ and $f_\gamma(k)\leq g_\delta(k)$. In particular, we obtain the desired inequality.
 
 Define $h\in\omega^\omega$ by $h(k):=\min\{g_\beta(k)\colon\beta\in B\}$ and observe that for all $\beta\in B$, $h\leq^*g_\beta$. Also, the claim proved in the previous paragraph guarantees that $f_\alpha\leq h$, whenever $\alpha\in A$. Since $A$ and $B$ have sizes $\kappa$ and $\lambda$, respectively, we deduce that, for any $\alpha<\kappa$ and $\beta<\lambda$, there exist $\xi\in A\setminus(\alpha+1)$ and $\eta\in B\setminus(\beta+1)$; therefore, $f_\alpha<^*f_\xi\leq h\leq g_\eta<^*g_\beta$, a contradiction to the definition of gap. This shows that our first case is impossible.
 
 Secondly, let us discard the case where there is $Y\subseteq X$ with $\aleph_1$ elements and such that $[Y]^2\subseteq K_0$. Clearly, $Y$ is a relation on $\omega^\omega$; moreover, we claim it is an injective function. Take $(f_\alpha,g_\beta),(f_\gamma,g_\delta)\in Y$ and suppose that $\beta<\delta$ or $\alpha<\gamma$. Either way, $\{(f_\alpha,g_\beta),(f_\gamma,g_\delta)\}\in[Y]^2\subseteq K_0$ and so there exists some natural number $k$ such that $g_\delta(k)<f_\alpha(k)$ or $g_\beta(k)<f_\gamma(k)$. In the former case, since $\delta\in S_\gamma$ and $\beta\in S_\alpha$, $f_\gamma(k)<g_\delta(k)<f_\alpha(k)<g_\beta(k)$; in the latter case, we use that $\beta\in S_\alpha$ and $\delta\in S_\gamma$ to obtain the inequalities $f_\alpha(k)<g_\beta(k)<f_\gamma(k)<g_\delta(k)$. In both cases, $f_\alpha\neq f_\gamma$ and $g_\beta\neq g_\delta$. Thus, $Y$ is indeed an injection.
 
 Fix $\{\gamma_\xi\colon\xi<\omega_1\}$ and  $\{\delta_\eta\colon\eta<\omega_1\}$, increasing enumerations of $\{\gamma<\kappa\colon f_\gamma\in\dom Y\}$ and $\{\delta<\lambda\colon g_\delta\in\img   Y\}$, respectively. Hence, $\ell:=\{(\xi,\eta)\in\omega_1\times\omega_1\colon(f_{\gamma_\xi},g_{\delta_\eta})\in Y\}$ is an injective function from $\omega_1$ onto $\omega_1$ and, according to Lemma~\ref{lemma:injection}, there is an uncountable set (which we enumerate in an increasing fashion) $E=\{\alpha_\xi\colon\xi<\omega_1\}$ in such a way that $\ell\restriction E$ is strictly increasing. Hence, by letting $\beta_\xi:=\delta_{\ell(\alpha_\xi)}$, for each $\xi<\omega_1$, we deduce that $\{\beta_\xi:\xi<\omega_1\}$ also forms an increasing sequence of $\aleph_1$ indices.
 
 Since $\kappa$ is regular and $\kappa>\omega_1$, $\delta:=\sup_{\vartheta<\omega_1}\alpha_\vartheta+1<\kappa$. If we consider the function $f_\delta$ we have that, for all $\vartheta<\omega_1$, $f_{\alpha_\vartheta}<^*f_\delta<^*g_{\beta_\vartheta}$. By arguments reminiscent of those we employed to construct $Z$ in the proof of Lemma~\ref{lemma:aleph1<b}, we obtain an uncountable set $Z\subseteq\omega_1$ and a natural number $m$ such that for all $\vartheta,\eta\in Z$ and $k\geq m$, $f_{\alpha_\vartheta}(k)<f_\delta(k)<g_{\beta_\eta}(k)$ and also $f_{\alpha_\vartheta}\restriction m=f_{\alpha_\eta}\restriction m$ and $g_{\beta_\vartheta}\restriction m=g_{\beta_\eta}\restriction m$.
 
 Fix two distinct ordinals $\vartheta,\eta\in Z$. The fact $Y\subseteq X$ implies that $\beta_\vartheta \in S_{\alpha_\vartheta}$ and so, $f_{\alpha_\vartheta}<g_{\beta_\vartheta}$. Now, if $k<m$, then $f_{\alpha_\vartheta}(k)<g_{\beta_\vartheta}(k)=g_{\beta_\eta}(k)$ and when $k\geq m$ we obtain $f_{\alpha_\vartheta}(k)<g_{\beta_\eta}(k)$. Hence, $f_{\alpha_\vartheta}<g_{\beta_\eta}$ and in a similar way, $f_{\alpha_\eta}<g_{\beta_\vartheta}$, a direct contradiction to $\{(f_{\alpha_\vartheta},g_{\beta_\vartheta}),(f_{\alpha_\eta},g_{\beta_\eta})\}\in K_0$. Thus we have proved that, under OCA, there are no gaps satisfying (\ref{obs:lambda}).
 
 The more general case follows by cleverly manipulating some arbitrary $(\kappa,\lambda)$-gap, say $\{d_\alpha\colon\alpha<\kappa\}$ and $\{e_\beta\colon\beta<\lambda\}$. For every $\alpha<\kappa$, by the regularity of the uncountable cardinal $\lambda$, there must exist some natural number $m_\alpha$ such that the set $\{\beta<\lambda\colon\forall k\geq m_\alpha\,(d_\alpha(k)<e_\beta(k))\}$ has size $\lambda$. Then, again following similar arguments, the function $\alpha\mapsto m_\alpha$ has a fiber, let's call it $A$, of size $\kappa$. In summary, for every $\alpha\in A$, we have that $$|\{\beta<\lambda\colon\forall k\geq m\,(d_\alpha(k)<e_\beta(k))\}|=\lambda.$$
 
 We then select some functions from our gap and shift them $m$ units. Formally, we let $s\in\omega^\omega$ be given by $s(n):=n+m$. If $\varphi\colon\kappa\to A$ is an order isomorphism and for every $\alpha<\kappa$ and $\beta<\lambda$ we write $f_\alpha:=d_{\varphi(\alpha)}\circ s$ and $g_\beta:=e_\beta\circ s$, we obtain a $(\kappa,\lambda)$-gap (recall that $|A|=\kappa$), $\{f_\alpha\colon\alpha<\kappa\}$ and $\{g_\beta\colon\beta<\lambda\}$, that satisfies condition (\ref{obs:lambda}). But we already proved that there are no such gaps.
\end{proof}

 For the next result of the section, we will use a theorem of Rothberger which appeared for the first time as \cite[Théorème~6, p.~121]{rothberger}, where, interestingly, he does not mention $\mathfrak b$ at all, and instead is concerned with proving an equivalence of the independent statement ``all families of functions in $\omega^\omega$ of order type $\omega_1$ are bounded.''
 
 Now we borrow an idea from \cite[Theorem~2.6, p.~37]{gaps}, where gaps are defined in terms of sets (and called {\sl non-separable gaps}), rather than functions. We do not need this definition, but we include it for the sake of completeness and for clarity of reasoning.
 
 Given two sets $A$ and $B$, $A\subset^*B$ means that $A\setminus B$ is finite, but $B\setminus A$ is infinite. Also, we will say that {\sl $A$ is almost contained in $B$} (and write $A\subseteq^*B$) whenever $A\setminus B$ is finite.
 
\begin{definition}\label{def:gaps2}
 For $\kappa$ and $\lambda$, two regular cardinals, a {\sl $(\kappa,\lambda)$-gap in $\mathcal P(\omega)$} is a pair of sequences $\{a_\alpha\colon\alpha<\kappa\}$ and $\{b_\beta\colon\beta<\lambda\}$, where for all $\alpha<\beta<\kappa$ and $\xi<\eta<\lambda$, $a_\alpha\cup b_\xi\subseteq\omega$ and $a_\alpha\subset^*a_\beta\subset^*b_\eta\subset^*b_\xi$. Moreover, if there is no $c\in\mathcal P(\omega)$ in such a way that $a_\alpha\subset^*c\subset^*b_\xi$ holds for every $\alpha<\kappa$ and $\xi<\lambda$, we will say that the gap is {\sl non-separable}.
\end{definition}

\begin{lemma}\label{lemma:kappa-gaps}
 If $\kappa<\mathfrak b$ is a cardinal, then there are no $(\kappa,\omega)$-gaps in $\omega^\omega$.
\end{lemma}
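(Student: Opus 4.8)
The plan is to contradict the non-interpolation clause in the definition of a gap: assuming $\{f_\alpha\colon\alpha<\kappa\}$ and $\{g_n\colon n<\omega\}$ form a $(\kappa,\omega)$-gap, I will manufacture a single $h\in\omega^\omega$ with $f_\alpha<^*h<^*g_n$ for all $\alpha<\kappa$ and $n<\omega$. The place where the hypothesis $\kappa<\mathfrak b$ is used is this: each $f_\alpha$ eventually drops below each $g_n$, but the threshold past which it stays below $g_n$ may drift to infinity as $n$ grows, so one cannot merely take a pointwise minimum of the $g_n$'s and hope it still lies above every $f_\alpha$. The remedy is to postpone passing to smaller and smaller $g_n$'s until each $f_\alpha$ has safely cleared them, and to do so by a single schedule that works for all $\kappa$-many $f_\alpha$ simultaneously, which is exactly what $\kappa<\mathfrak b$ makes possible.

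Concretely, I would first define, for each $\alpha<\kappa$, the function $F_\alpha\in\omega^\omega$ by letting $F_\alpha(n)$ be the least $m$ with $f_\alpha(k)<g_n(k)$ for every $k\geq m$; this is well defined because $f_\alpha<^*g_n$. Since $|\{F_\alpha\colon\alpha<\kappa\}|\leq\kappa<\mathfrak b$, this family is bounded, so I may fix a strictly increasing $G\in\omega^\omega$ with $F_\alpha<^*G$ for every $\alpha<\kappa$. Using $G$ to break $[G(0),\omega)$ into the consecutive blocks $[G(n),G(n+1))$, define $h\in\omega^\omega$ by
$$h(k):=\min\{g_j(k)\colon j\leq n\}\quad\text{whenever }G(n)\leq k<G(n+1),$$
and $h(k):=0$ for $k<G(0)$ (these last values are irrelevant to all the $<^*$-claims).

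It then remains to verify the two families of inequalities. For $h<^*g_m$: if $k\geq G(m+1)$, then $k$ lies in a block $[G(n),G(n+1))$ with $n\geq m+1$, so $h(k)=\min_{j\leq n}g_j(k)\leq g_m(k)$; hence $h\leq^*g_m$ for every $m$, and combining $h\leq^*g_{m+1}$ with $g_{m+1}<^*g_m$ (here the decreasing nature of the $g$-sequence is used) yields $h<^*g_m$. For $f_\alpha<^*h$: choose $N_\alpha$ with $F_\alpha(n)<G(n)$ for all $n\geq N_\alpha$, put $M_\alpha:=\max\{F_\alpha(j)\colon j<N_\alpha\}$, and consider any $k\geq\max\{G(N_\alpha+1),M_\alpha\}$. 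Such a $k$ lies in a block $[G(n),G(n+1))$ with $n\geq N_\alpha$, and for each $j\leq n$ one has $k\geq F_\alpha(j)$ --- when $j\geq N_\alpha$ because $k\geq G(n)\geq G(j)>F_\alpha(j)$, and when $j<N_\alpha$ because $k\geq M_\alpha\geq F_\alpha(j)$ --- so $f_\alpha(k)<g_j(k)$ for every $j\leq n$, and therefore $f_\alpha(k)<\min_{j\leq n}g_j(k)=h(k)$. Thus $h$ interpolates between the two sequences, contradicting the assumption that they form a gap; so no $(\kappa,\omega)$-gap exists.

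The only step that is not pure bookkeeping is the passage from the family $\{F_\alpha\colon\alpha<\kappa\}$ to a single dominating $G$, i.e.\ the use of $\kappa<\mathfrak b$; everything after that --- the block construction for $h$ and the two verifications --- is elementary. (When $\kappa$ is countable this appeal to $\mathfrak b$ can be replaced by Lemma~\ref{lemma:b}.)
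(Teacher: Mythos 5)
Your proof is correct, and it takes a genuinely different route from the paper's. The paper translates the gap into the pair of towers $A_\alpha:=f_\alpha^\downarrow$ and $B_n:=g_n^\downarrow$ in $\mathcal P(\omega\times\omega)$, shows that any set $S$ separating them would yield an interpolating function (so the translated gap is non-separable in the sense of Definition~\ref{def:gaps2}), and then uses $\kappa<\mathfrak b$ to dominate the diagonal functions $h_\alpha(n)=\max(\{0\}\cup\{j:(n,j)\in A_\alpha\cap B_n\})$ and manufacture such an $S$. You stay entirely inside $\omega^\omega$: the auxiliary functions you dominate are the threshold functions $F_\alpha$, and the interpolant is produced directly by the block construction $h=\min_{j\le n}g_j$ on $[G(n),G(n+1))$. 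The hypothesis $\kappa<\mathfrak b$ enters at the structurally identical point in both arguments (dominating a $\kappa$-indexed family of auxiliary functions), but your verification of $f_\alpha<^*h<^*g_n$ is more explicit about the one delicate issue --- that the threshold past which $f_\alpha$ stays below $g_n$ can drift with $n$ --- which the paper's closing step (``Employing similar arguments, it is verified that $S\subset^*B_n$'') leaves to the reader; indeed, for an arbitrary dominating $f$ the set $f^\downarrow$ need not be almost contained in $B_n$, so the paper's separator implicitly needs the same kind of trimming that your blocks perform. What the paper's detour buys is the link to non-separable gaps of sets, which is thematic for the surrounding material on Hausdorff gaps; what your argument buys is self-containedness and a fully checked interpolation. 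Only cosmetic points remain: take $M_\alpha:=0$ when $N_\alpha=0$, and note that boundedness of $\{F_\alpha:\alpha<\kappa\}$ a priori yields $\leq^*$-domination, from which a strictly increasing $G$ with $F_\alpha<^*G$ for all $\alpha$ follows by the usual adjustment.
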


\begin{proof}
 For a function $f\in\omega^\omega$, we denote by $f^\downarrow$ the set of pairs of natural numbers $(m,n)$ such that $n\leq f(m)$, which is always an infinite subset of $\omega\times\omega$. It is straightforward to show that\begin{align}\label{obs:f_iff_a}f<^*g\text{ implies }f^\downarrow\subset^*g^\downarrow.\end{align}
  So, if for some cardinal $\kappa$ we have a $(\kappa,\omega)$-gap in $\omega^\omega$ consisting of $\{f_\alpha\colon\alpha<\kappa\}$ and $\{g_n\colon n<\omega\}$, we define, for all $\alpha<\kappa$ and $n<\omega$, $A_\alpha:=f_\alpha^\downarrow$ and $B_n:=g_n^\downarrow$.
 
 According to (\ref{obs:f_iff_a}), $\{A_\alpha\colon\alpha<\kappa\}$, $\{B_n\colon n<\omega\}$ is a $(\kappa,\omega)$-gap in $\omega\times\omega$. To verify that it is non-separable, we prove the following claim.
 
\begin{nclaim}
 There is no set $S\subseteq\omega\times\omega$ such that for all $\alpha<\kappa$ and $n<\omega$, $A_\alpha\subset^*S\subset^*B_n$.
\end{nclaim}

\begin{claimproof}
 By contradiction, suppose there is such a set $S$. Next, define $V_n:=\{n\}\times\omega$ and observe that $$S\cap V_n=(S\cap V_n\cap B_0)\cup(S\cap V_n\setminus B_0).$$ Since $V_n\cap B_0$ and $S\setminus B_0$ are both finite, $S\cap V_n$ is finite as well.
 
 Consequently, we can define $h(n):=\max(\{0\}\cup\{i<\omega\colon(n,i)\in S\})$ which yields a function $h\in\omega^\omega$ such that $S\subseteq h^\downarrow$ and, as a consequence, $f_\alpha<^*h$ for all $\alpha<\kappa$. Following rudimentary methods, for all $n<\omega$ there exists $m<\omega$ such that $$S\setminus \bigcup_{i=0}^mV_i\subseteq g_{n+1}^\downarrow$$ and as such, $h\leq^*g_{n+1}<^*g_n$. A contradiction to the fact that $\{f_\alpha\colon\alpha<\kappa\}$, $\{g_n\colon n<\omega\}$ is a gap in $\omega^\omega$.
\end{claimproof}
 
 Using the fact that $\kappa<\mathfrak b$, one can easily contradict the Claim: by choosing a function $f$ that dominates all functions of the form $$h_\alpha(n):=\max(\{0\}\cup\{j<\omega:(n,j)\in A_\alpha\cap B_n\}),$$ for $\alpha<\kappa$, we can consider $S:=f^\downarrow$. Now, given $\alpha<\kappa$ and $n<\omega$, the hypothesis $A_\alpha\subseteq^* B_n$, together with (\ref{obs:f_iff_a}), imply that $A_\alpha\setminus S$ is almost contained in the finite set $(A_\alpha\cap B_n)\setminus h_\alpha^\downarrow$. Employing similar arguments, it is verified that $S\subset^*B_n$, and so we are done.
\end{proof}

\begin{lemma}\label{lemma:kappa<b}
 Let $\kappa\leq\mathfrak b$ be a cardinal number. Then there exists $\{f_\alpha\colon\alpha<\kappa\}\subseteq\omega^\omega$ such that for all $\alpha<\beta<\kappa$,
\begin{enumerate}
 \item $f_\alpha<^*f_\beta$ and
 \item $f_\alpha$ is strictly increasing.
\end{enumerate} 
\end{lemma}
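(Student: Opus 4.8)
The plan is to build the family by transfinite recursion on $\alpha<\kappa$, maintaining conditions (1) and (2) at every stage. So suppose $\beta<\kappa$ and that $\{f_\alpha\colon\alpha<\beta\}$ has already been produced in such a way that, for all $\alpha<\alpha'<\beta$, $f_\alpha<^*f_{\alpha'}$ and each $f_\alpha$ is strictly increasing; I then need to exhibit a suitable $f_\beta$.

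The key observation is that the partial family $\{f_\alpha\colon\alpha<\beta\}$ has cardinality $|\beta|<\kappa\leq\mathfrak b$, so by the very definition of the bounding number it cannot be unbounded; hence there is $g\in\omega^\omega$ with $f_\alpha\leq^*g$ for every $\alpha<\beta$. I would then set
\[
 f_\beta(n):=n+\max_{m\leq n}\bigl(g(m)+1\bigr)\qquad(n<\omega).
\]
This $f_\beta$ is strictly increasing, since $n\mapsto n$ is strictly increasing and $n\mapsto\max_{m\leq n}(g(m)+1)$ is non-decreasing, which gives (2). Moreover $g(n)<f_\beta(n)$ for every $n$, so for each $\alpha<\beta$ the relation $f_\alpha\leq^*g$ together with the transitivity of the eventual-domination relations yields $f_\alpha<^*f_\beta$, i.e.\ condition (1). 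This completes the recursive step, and the family $\{f_\alpha\colon\alpha<\kappa\}$ so obtained is as required.

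There is essentially no serious obstacle here; the only point worth stressing is that no separate argument is needed at limit stages, because the cardinality bound $|\beta|<\mathfrak b$ — and hence the boundedness of $\{f_\alpha\colon\alpha<\beta\}$ — holds uniformly for every $\beta<\kappa$, whether $\beta$ is a successor or a limit ordinal. (For countable $\kappa$ this is just Lemma~\ref{lemma:b}; the hypothesis $\kappa\leq\mathfrak b$ is precisely what allows the same idea to run through uncountable lengths.)
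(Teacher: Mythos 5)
Your proof is correct and follows essentially the same route as the paper's: use $|\beta|<\kappa\leq\mathfrak b$ to bound the partial family by some $g$, then pass to a strictly increasing function strictly above $g$. You simply unpack explicitly the two steps that the paper compresses into its one-line appeal to the definition of $\mathfrak b$ together with Lemma~\ref{lemma:b}.
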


\begin{proof}
 Suppose that for some $\alpha<\kappa$, $\{f_\xi\colon\xi<\alpha\}$ satisfies (1) and (2). Since $\alpha<\mathfrak b$ and then by Lemma~\ref{lemma:b}, there is a strictly increasing function $f_\alpha$ such that for all $\xi<\lambda$, $f_\xi<^*f_\alpha$.
\end{proof}

\begin{theorem}\label{theo:b=omega_2}
 OCA implies that $\mathfrak b=\aleph_2$.
\end{theorem}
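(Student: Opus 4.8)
The plan is to establish the two inequalities $\aleph_2\le\mathfrak b$ and $\mathfrak b\le\aleph_2$ separately. The first is already in hand: Lemma~\ref{lemma:aleph1<b} says that, under OCA, every $S\in[\omega^\omega]^{\aleph_1}$ is bounded, so no unbounded subset of $\omega^\omega$ can have size $\aleph_1$, and hence the least cardinality of an unbounded family is at least $\aleph_2$. (Combined with $\mathfrak b\le\mathfrak c$ from the Corollary to Lemma~\ref{lemma:b}, this also records $\mathfrak b\ne\aleph_1$, so OCA refutes CH.) Thus the whole content of the theorem is the opposite inequality.

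For $\mathfrak b\le\aleph_2$ I would argue by contradiction: assume $\mathfrak b\ge\aleph_3$, so $\aleph_1,\aleph_2<\mathfrak b$ (and recall $\mathfrak b$ is regular). The target is to manufacture a $(\kappa,\lambda)$-gap in $\omega^\omega$ with $\kappa$ and $\lambda$ both regular and uncountable and $\kappa>\omega_1$; this is impossible by Lemma~\ref{lemma:oca_gaps}, so producing such a gap yields the contradiction. The raw material is Rothberger's theorem: since Lemma~\ref{lemma:kappa-gaps} shows no $(\kappa,\omega)$-gap exists for $\kappa<\mathfrak b$, Rothberger's theorem pins $\mathfrak b$ down as the least cardinal carrying a gap with a countable side and in particular hands us a $(\mathfrak b,\omega)$-gap $\{f_\alpha\colon\alpha<\mathfrak b\}$, $\{g_n\colon n<\omega\}$, where by Lemma~\ref{lemma:kappa<b} we may take every $f_\alpha$ strictly increasing.

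I would then run a transfinite recursion that ``thickens'' the countable side of this gap. At each stage one has a pregap one of whose sides has cardinality — hence cofinality — strictly below $\mathfrak b$ while the other has cofinality $\omega$; by Proposition~\ref{prop:gaps_sim}, the remark on cofinal subgaps, and Lemma~\ref{lemma:kappa-gaps}, no such pregap can be a gap, so at every step one can choose an interpolant. Organising these interpolants, together with a suitably chosen cofinal subfamily of $\{f_\alpha\colon\alpha<\mathfrak b\}$, into two interleaved $<^*$-monotone sequences then produces a $(\kappa,\lambda)$-gap with $\kappa,\lambda$ regular uncountable and $\kappa>\omega_1$, and Lemma~\ref{lemma:oca_gaps} closes the argument; hence $\mathfrak b\le\aleph_2$, and with the first inequality $\mathfrak b=\aleph_2$.

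The hard part will be exactly this thickening step. Rothberger's theorem only supplies a gap with a \emph{countable} side, whereas Lemma~\ref{lemma:oca_gaps} demands that \emph{both} sides be uncountable of regular size with the longer side strictly above $\omega_1$; the delicate point is to make the recursion long enough that the side assembled from the $f_\alpha$'s has cofinality above $\omega_1$, while keeping every intermediate pregap simple enough — one side of cofinality $\omega$, the other of length below $\mathfrak b$ — that Lemma~\ref{lemma:kappa-gaps} (through Proposition~\ref{prop:gaps_sim} and the cofinal-subgap remark) still certifies it is fillable, and doing the bookkeeping so that the two sides of the final object genuinely interleave and admit no common $<^*$-interpolant. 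Everything else should be routine cofinality arithmetic using $\aleph_2<\mathfrak b$ together with the transitivity of $<^*$.
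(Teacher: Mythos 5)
Your first inequality ($\aleph_2\le\mathfrak b$, straight from Lemma~\ref{lemma:aleph1<b}) and your choice of target (manufacture a gap forbidden by Lemma~\ref{lemma:oca_gaps}) both match the paper. The gap in your argument is the construction of that gap. You propose to start from a $(\mathfrak b,\omega)$-gap supplied by Rothberger's theorem and ``thicken'' its countable side by recursively choosing interpolants. This fails for two reasons. First, a $(\mathfrak b,\omega)$-\emph{gap} by definition admits no function lying $<^*$-between its two sides, so there is nothing to interpolate between $\{f_\alpha:\alpha<\mathfrak b\}$ and $\{g_n:n<\omega\}$; any thickening must abandon the $g_n$'s and work with $<^*$-upper bounds of the $f_\alpha$'s that need not sit below the $g_n$'s, at which point the Rothberger gap contributes nothing. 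Second, and decisively: if the increasing side of your intermediate pregaps is the full family $\{f_\alpha:\alpha<\mathfrak b\}$, or any cofinal subfamily of it (which, $\mathfrak b$ being regular, still has cofinality $\mathfrak b$), then at a limit stage of countable cofinality you face a $(\mathfrak b,\omega)$-pregap, and Lemma~\ref{lemma:kappa-gaps} is silent about it because its hypothesis is $\kappa<\mathfrak b$. Such a pregap genuinely can be a gap (your Rothberger gap is one), so the recursion can stall at a stage of countable cofinality and deliver only a $(\mathfrak b,\omega)$-gap, which Lemma~\ref{lemma:oca_gaps} does not forbid. The configuration you describe for the intermediate steps --- ``one side of cardinality, hence cofinality, strictly below $\mathfrak b$'' --- never actually obtains while the long side is kept at length $\mathfrak b$. (Two smaller issues: the paper only proves the non-existence half of Rothberger's theorem, not the existence of a $(\mathfrak b,\omega)$-gap; and you give no reason why your recursion terminates.)

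The repair is to keep the \emph{increasing} side, not the decreasing one, at length below $\mathfrak b$, and this is exactly what the paper does. Assuming $\aleph_2<\mathfrak b$, take the $<^*$-increasing sequence $\{f_\alpha:\alpha<\aleph_2\}$ of Lemma~\ref{lemma:kappa<b}; it is bounded, so its set $U$ of $<^*$-upper bounds is nonempty. By Zorn's Lemma choose a maximal $<^*$-decreasing sequence $p\colon\delta\to U$ (this replaces your possibly non-terminating recursion). Maximality forbids any interpolant; the one-step trick $h(n)=\max\{p(\gamma)(n)-1,0\}$ rules out $\delta$ being a successor; and Lemma~\ref{lemma:kappa-gaps} now \emph{does} apply, since the increasing side has size $\aleph_2<\mathfrak b$, forcing $\lambda:=\cf(\delta)>\omega$. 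The resulting $(\aleph_2,\lambda)$-gap contradicts Lemma~\ref{lemma:oca_gaps}. Your instinct about which lemmas carry the load is right, but Rothberger's existence theorem is not needed and, as deployed, sends the argument down a dead end.
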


\begin{proof}
 By assuming that $\aleph_2<\mathfrak b$, we construct an $(\aleph_2,\lambda)$-gap where $\lambda$ is a regular uncountable cardinal and thus, by Lemmas \ref{lemma:aleph1<b} and \ref{lemma:oca_gaps} and, of course, OCA, we complete the proof.
 
 Since $\aleph_2<\mathfrak b$, Lemma~\ref{lemma:kappa<b} yields a sequence $\{f_\alpha\colon\alpha<\aleph_2\}$ as stated in the lemma. Moreover, the same inequality produces a function $f\in\omega^\omega$ which dominates each $f_\alpha$.
 
 Let $U:=\{h\in\omega^\omega\colon\forall\alpha<\omega_2\,(f_\alpha<^*h)\}$. Now define $\mathbb P$ by the formula, $p\in\mathbb P$ if and only if there exists an ordinal number $\delta>0$ such that $p\colon\delta\longrightarrow U$ and for all $\xi<\eta<\delta$, $p(\eta)<^*p(\xi)$. 
 
 Clearly, $\{(0,f)\}$ is a member of $U$ and thus $(\mathbb P,\subseteq)$ is a nonempty partial order. If $\mathcal C$ is any chain in said partial order, clearly $f:=\bigcup\mathcal C$ is a function whose domain is the ordinal $\delta:=\sup\{\dom(p)\colon p\in\mathcal C\}$ and so, $f\colon\delta\to U$. Furthermore, given $\xi<\eta<\delta$, there exist $p,q\in\mathcal C$ such that $\xi\in\dom(p)$ and $\eta\in\dom(q)$. Then, since $\mathcal C$ is a chain, $r:=p\cup q\in\{p,q\}\subseteq\mathbb P$ and, as such, $f(\eta)=r(\eta)<^*r(\xi)=f(\xi)$. We proved that $f\in\mathbb P$.
 
 Let $p$ be a maximal element of $\mathbb P$ (we're using Zorn's Lemma here) and set $\delta:=\dom(p)$.

\begin{nclaim}
 There is no $h\in\omega^\omega$ satisfying $f_\alpha<^*h<^*p(\beta)$, for all $\beta<\delta$.
\end{nclaim}
 
 Indeed, if $h$ satisfies the inequalities given in the Claim, then $q:=p\cup\{(\delta,h)\}$ is a member of $\mathbb P$ such that $p\subsetneq q$, a contradiction to $p$'s maximality.
 
 Let us now use the Claim to prove that $\delta$ is a limit ordinal. Seeking a contradiction, suppose $\delta=\gamma+1$ and define $h\in\omega^\omega$ by $h(n):=\max\{p(\gamma)(n)-1,0\}$, for each $n<\omega$. The fact that $f_0<^*p(\gamma)$ implies that, for some $k<\omega$ and all $n\in\omega\setminus k$, $f_0(n)<p(\gamma)(n)$, i.e., $h(n)=p(\gamma)(n)-1$, whenever $n\in\omega\setminus k$. As a consequence, for each $\beta<\delta$, $h<^*p(\beta)$. On the other hand, given $\alpha<\omega_2$, we obtain $f_\alpha<^*f_{\alpha+1}<^*p(\gamma)$ and therefore, $f_\alpha<^*h$; a flagrant contradiction to our Claim.
 
 From the previous paragraph we conclude that $\lambda:=\cf(\delta)$ is an infinite cardinal. Let $\{\delta_\beta\colon\beta<\lambda\}$ be a cofinal subset of $\delta$ with $\delta_\beta<\delta_\gamma$, for all $\beta<\gamma<\lambda$. Then, according to our Claim, the pair $\{f_\alpha\colon\alpha<\omega_2\}$, $\{p(\delta_\beta)\colon\beta<\lambda\}$ is an $(\aleph_2,\lambda)$-gap. Hence, Lemma~\ref{lemma:kappa-gaps} implies that $\lambda>\omega$ and this completes our proof.
\end{proof}

\section{A very brief introduction to PFA}
 
 We now concentrate on the consistency of OCA and its negation with ZFC. The previous section is dedicated to proving that, under OCA, $\frak b=\aleph_2$. Given that in ZFC $\frak b\leq\mathfrak c$, the continuum hypothesis is clearly inconsistent with OCA, meaning that in any model in which CH is true, OCA is necessarily false and thus, by the consistency of CH, it is consistent with ZFC that OCA is false. All that remains is to exhibit a model of ZFC where OCA holds and we will have proven the relative consistency of Todorc\v{e}vi\'{c}'s axiom.
 
 The main focus of the final sections is showing that the Proper Forcing Axiom, commonly abbreviated as PFA, implies OCA. Showing that PFA is itself consistent involves assuming the existence of a supercompact cardinal to produce a generic model of ZFC in which PFA is true. For more details on the consistency of PFA we direct the interested reader to \cite[Chapter~31, p.~607]{jech}.
 
 With the hope of making the present paper accessible to more readers, our presentation of PFA will be through infinitary games (thus, avoiding the use of elementary submodels or preservation of stationary sets in generic extensions).
 
 We use the traditional notation concerning {\sl forcing}, which can be found in \cite[Chapter~VII, p.~184]{kunen}.
 
\begin{definition}\label{def:game}
 Given a forcing notion $\mathbb P$ and some element $p\in\mathbb P$, the {\sl proper game under $p$ in $\mathbb P$} is a turn-based two-player game $\Game(\mathbb P,p)$ with the following rules. Given $n<\omega$, in his $n$th turn, {\tt Player I} chooses a $\mathbb P$-name $\dot\alpha_n$ such that $p\Vdash\dot\alpha_n\in\ON$ (to be read as ``$\dot\alpha_n$ is an ordinal number''). In response, {\tt Player II} chooses an ordinal number $\beta_n$.
 
\begin{center}
\begin{tabular}{c|c c c c} 
 {\tt Player I} & $\dot\alpha_0$ & $\dot\alpha_1$ & \dots & $\dot\alpha_n$ \\ 
 \hline
 {\tt Player II} & $\beta_0$ & $\beta_1$ & \dots & $\beta_n$
\end{tabular}
\end{center}
 
 After $\omega$ turns, the game will have produced two sequences, $\{\dot\alpha_n:n<\omega\}$ and $\{\beta_n:n<\omega\}$. {\tt Player II} wins the game if there is some $q\leq p$ (naturally, $\leq$ is the corresponding order of the forcing notion $\mathbb P$) such that $$q\Vdash\forall n<\omega\ \exists k<\omega\ (\dot\alpha_n=\beta_k);$$ otherwise, {\tt Player I} wins.
\end{definition}

 Hence, a forcing notion $\mathbb P$ is said to be {\sl proper} if for every $p\in\mathbb P$, {\tt Player II} has a winning strategy for the game $\Game(\mathbb P,p)$.

\begin{definition}[Proper Forcing Axiom]\label{def:pfa}
 Let $\mathbb P$ be a proper forcing notion. If $\mathcal D$ is a family of no more than $\aleph_1$ dense subsets of $\mathbb P$, then there is a $\mathcal D$-generic filter, that is, a filter in $\mathbb P$ that intersects every member of $\mathcal D$.
\end{definition}

 The following result is a straightforward consequence of \cite[Theorem~31.15, p.~604]{jech}. We refer the reader to \cite[Chapter~VIII, p.~251]{kunen} for details related to two-step iterations.

\begin{lemma}[Shelah]\label{lemma:shelah}
 If $P$ is a proper forcing notion and $\dot Q$ is a $P$-name such that $$1\Vdash\dot Q\text{ is proper},$$ then the two-step iteration $P*\dot Q$ is also proper.
\end{lemma}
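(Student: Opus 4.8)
The plan is to use the game characterization of properness directly rather than routing through an elementary-submodel argument (one could instead invoke the equivalence of our game formulation with the standard definition and quote \cite[Theorem~31.15]{jech}, but the game proof is self-contained and in the spirit of the paper). Player II's winning strategy for $\Game(P*\dot Q,(p_0,\dot q_0))$ will be manufactured by running, in parallel, a winning strategy for the $P$-game and a $P$-name for a winning strategy for the $\dot Q$-game, feeding the output of the latter as Player I's moves into the former. So fix $(p_0,\dot q_0)\in P*\dot Q$. Since $P$ is proper, fix a winning strategy $\tau$ for Player II in $\Game(P,p_0)$. Since $1\Vdash\dot Q\text{ is proper}$ and $p_0\Vdash\dot q_0\in\dot Q$, we get $p_0\Vdash$ ``Player II has a winning strategy in $\Game(\dot Q,\dot q_0)$'', so by the maximum principle there is a $P$-name $\dot\tau$ with $p_0\Vdash$ ``$\dot\tau$ is a winning strategy for Player II in $\Game(\dot Q,\dot q_0)$''. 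I would also use the standard bookkeeping for two-step iterations (see \cite[Chapter~VIII]{kunen}): each $P*\dot Q$-name $\dot\alpha$ determines, after forcing with $P$, a $\dot Q$-name $\dot\alpha^{\ast}$ in a way compatible with evaluation, i.e. $\dot\alpha^{G*H}=(\dot\alpha^{\ast})^{H}$ whenever $G$ is $P$-generic and $H$ is $\dot Q^{G}$-generic over $V[G]$; in particular, if $(p_0,\dot q_0)\Vdash\dot\alpha\in\ON$ then $p_0\Vdash$ ``$\dot q_0\Vdash\dot\alpha^{\ast}\in\ON$''.

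Next I would describe the strategy for Player II. Suppose that at stage $n$ Player I has disclosed $P*\dot Q$-names $\dot\alpha_0,\dots,\dot\alpha_n$ with $(p_0,\dot q_0)\Vdash\dot\alpha_i\in\ON$. Player II forms a $P$-name $\dot\beta_n$ for the $n$th ordinal that $\dot\tau$ instructs Player II to play in $\Game(\dot Q,\dot q_0)$ when Player I's opening moves are $\dot\alpha_0^{\ast},\dots,\dot\alpha_n^{\ast}$; this is a legitimate $P$-name for an ordinal (so $p_0\Vdash\dot\beta_n\in\ON$), and it depends only on $\dot\alpha_0,\dots,\dot\alpha_n$. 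Player II then treats $\dot\beta_0,\dots,\dot\beta_n$ as Player I's moves in $\Game(P,p_0)$, lets $\tau$ prescribe an ordinal $\gamma_n$, and plays $\gamma_n$ in the original game. Because $\dot\beta_n$ becomes available as soon as $\dot\alpha_n$ is revealed, this prescription respects the turn order; and after $\omega$ rounds the sequences $\langle\dot\beta_n:n<\omega\rangle$, $\langle\gamma_n:n<\omega\rangle$ form a complete play of $\Game(P,p_0)$ in which Player II follows $\tau$, while $p_0$ forces that $\langle\dot\alpha_n^{\ast}:n<\omega\rangle$ together with $\langle\dot\beta_n:n<\omega\rangle$ forms a complete play of $\Game(\dot Q,\dot q_0)$ in which Player II follows $\dot\tau$.

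To see this strategy wins, note first that since $\tau$ is winning there is $p'\leq p_0$ with $p'\Vdash\forall n<\omega\,\exists k<\omega\,(\dot\beta_n=\gamma_k)$. Since $p_0$ forces ``$\dot\tau$ is winning'' and the above is a complete $\dot\tau$-play, $p_0$ forces that there is $q'\leq\dot q_0$ with $q'\Vdash\forall n\,\exists k\,(\dot\alpha_n^{\ast}=\dot\beta_k)$; by the maximum principle fix a $P$-name $\dot q'$ with $p_0\Vdash\dot q'\leq\dot q_0\wedge\dot q'\Vdash\forall n\,\exists k\,(\dot\alpha_n^{\ast}=\dot\beta_k)$. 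Then $(p',\dot q')$ is a condition: $p'\leq p_0$ and $p_0\Vdash\dot q'\leq\dot q_0\in\dot Q$, so $p'\Vdash\dot q'\in\dot Q$; and clearly $(p',\dot q')\leq(p_0,\dot q_0)$. Finally, if $G$ is $P$-generic with $p'\in G$ and $H$ is $\dot Q^{G}$-generic with $(\dot q')^{G}\in H$, then for each $n$ there is $k$ with $(\dot\alpha_n^{\ast})^{H}=(\dot\beta_k)^{G}$, and since $p'\in G$ there is $j$ with $(\dot\beta_k)^{G}=\gamma_j$, whence $\dot\alpha_n^{G*H}=\gamma_j$. Thus $(p',\dot q')\Vdash\forall n<\omega\,\exists k<\omega\,(\dot\alpha_n=\gamma_k)$, so Player II wins $\Game(P*\dot Q,(p_0,\dot q_0))$. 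As $(p_0,\dot q_0)$ was arbitrary, $P*\dot Q$ is proper.

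The core idea — combine the two winning strategies by interleaving — is short, so I expect the main obstacle to be careful bookkeeping rather than any clever trick: making precise the passage from a $P*\dot Q$-name to the associated $P$-name for a $\dot Q$-name and verifying that it commutes with evaluation; and invoking the maximum principle repeatedly to turn ``$p_0$ forces that a suitable object exists'' into actual $P$-names (for $\dot\tau$, for each $\dot\beta_n$, and for $\dot q'$). One must also double-check that the interleaving genuinely respects the game's turn structure, i.e. that $\dot\beta_n$, and hence $\gamma_n$, is computable from the information Player I has revealed by the end of round $n$ — which holds because the $n$th move dictated by a strategy depends only on the opponent's first $n+1$ moves.
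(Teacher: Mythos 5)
Your proof is correct, but it takes a genuinely different route from the paper, which offers no proof at all: the lemma is stated as a ``straightforward consequence'' of Jech's Theorem~31.15, i.e.\ of the iteration theorem for the standard elementary-submodel formulation of properness. Importing that theorem here tacitly uses the equivalence of the paper's game-theoretic definition of proper with the standard one, a nontrivial fact the paper never establishes; your self-contained argument, which works entirely inside the game formulation, therefore closes a real gap rather than duplicating the cited proof. The mechanics are right: the translation $\dot\alpha\mapsto\dot\alpha^{*}$ from $P*\dot Q$-names to ($P$-names for) $\dot Q$-names, the interleaving of the ground-model strategy $\tau$ with the name $\dot\tau$, the observation that $\dot\beta_n$ (hence $\gamma_n$) is available by the end of round $n$, and the amalgamated condition $(p',\dot q')$ all do what you claim. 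The one point deserving an explicit sentence in a full write-up --- a wrinkle you inherit from the paper's definition of the game rather than introduce yourself --- is that Player I's legal moves range over a proper class of names for ordinals, so ``Player II has a winning strategy in $\Game(\dot Q,\dot q_0)$'' is not literally an existential statement about a set in $V[G]$, and the maximum principle does not apply verbatim to produce $\dot\tau$; this is repaired in the usual way by replacing each name for an ordinal by an equivalent nice name, so that strategies become set functions and your applications of the maximum principle (for $\dot\tau$, for the $\dot\beta_n$, and for $\dot q'$) go through.
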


 Recall that a forcing notion is {\sl $\sigma$-closed} if every countable descending chain of elements has a lower bound (i.e., there exists some element that is below all members of the chain). A forcing notion is called {\sl ccc} (or said to have the {\sl countable chain condition}) if all its antichains are countable.
 
 As a way of better illustrating the notion of proper, we show that all $\sigma$-closed and all ccc forcing notions are proper.
 
\begin{lemma}\label{lemma:sigma}
 Let $\mathbb P$ be a forcing notion. If $\mathbb P$ is $\sigma$-closed, then $\mathbb P$ is proper.
\end{lemma}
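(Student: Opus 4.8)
The plan is to exhibit, for an arbitrary fixed $p\in\mathbb P$, an explicit winning strategy for {\tt Player II} in the game $\Game(\mathbb P,p)$. The underlying idea: while the visible play runs, {\tt Player II} secretly builds a descending chain of conditions below $p$, arranging that the $n$th condition decides the ordinal $\dot\alpha_n$ just named by {\tt Player I}; then $\sigma$-closedness hands us a common lower bound of the whole chain, and this lower bound is the condition $q$ that Definition~\ref{def:game} asks {\tt Player II} to produce.

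To set this up, first note that for any condition $r\le p$ and any $\mathbb P$-name $\dot\alpha$ with $p\Vdash\dot\alpha\in\ON$, the set of conditions below $r$ that decide the value of $\dot\alpha$ is dense below $r$ --- indeed, any generic filter through $r$ computes $\dot\alpha$ as one specific ordinal, and some condition in that filter forces this. Using the Axiom of Choice, fix for every such pair $(r,\dot\alpha)$ a condition $c(r,\dot\alpha)\le r$ together with an ordinal $\delta(r,\dot\alpha)$ so that $c(r,\dot\alpha)\Vdash\dot\alpha=\check\delta(r,\dot\alpha)$. Now {\tt Player II} plays as follows. Put $q_{-1}:=p$; upon seeing $\dot\alpha_n$ (at which point $q_{n-1}$ is already defined), set $q_n:=c(q_{n-1},\dot\alpha_n)$ and answer $\beta_n:=\delta(q_{n-1},\dot\alpha_n)$, so that $q_n\le q_{n-1}$ and $q_n\Vdash\dot\alpha_n=\check\beta_n$. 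Since each $q_n$, and hence each $\beta_n$, depends only on $\dot\alpha_0,\dots,\dot\alpha_n$, this recipe is a legitimate strategy.

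It remains to see the strategy wins. After the $\omega$ rounds we are left with a descending chain $p=q_{-1}\ge q_0\ge q_1\ge\cdots$ in $\mathbb P$; by $\sigma$-closedness it has a lower bound $q\in\mathbb P$, and $q\le q_0\le p$. For every $n<\omega$ we have $q\le q_n\Vdash\dot\alpha_n=\check\beta_n$, so $q\Vdash\dot\alpha_n=\check\beta_n$ and a fortiori $q\Vdash\exists k<\omega\,(\dot\alpha_n=\beta_k)$. As this holds for each $n$ and the sequence $\langle\beta_k:k<\omega\rangle$ belongs to the ground model, $q\Vdash\forall n<\omega\,\exists k<\omega\,(\dot\alpha_n=\beta_k)$, which is precisely the winning condition for {\tt Player II}. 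Since $p$ was arbitrary, every $\Game(\mathbb P,p)$ is a win for {\tt Player II}, i.e.\ $\mathbb P$ is proper.

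The single nontrivial step is the last forcing computation, passing from ``$q\Vdash\dot\alpha_n=\check\beta_n$ for every $n$'' to ``$q\Vdash\forall n<\omega\,\exists k<\omega\,(\dot\alpha_n=\beta_k)$''; this becomes transparent by working in a generic extension $V[G]$ with $q\in G$, where for each $n$ one has $\dot\alpha_n^G=\beta_n$, so $k=n$ witnesses the inner existential. The rest is bookkeeping, and the hypothesis that $\mathbb P$ is $\sigma$-closed is invoked exactly once --- to produce the lower bound $q$ of the chain $\langle q_n:n<\omega\rangle$.
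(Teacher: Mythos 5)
Your proof is correct and follows essentially the same route as the paper's: build a descending chain $q_0\ge q_1\ge\cdots$ below $p$ with $q_n$ deciding $\dot\alpha_n$ as $\beta_n$, then invoke $\sigma$-closedness once at the end to obtain a common lower bound $q$ witnessing {\tt Player II}'s victory. Your added care in fixing a choice function so the strategy is well defined, and in justifying the final forcing step, only makes the argument more explicit.
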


\begin{proof}
 Let $p$ be an arbitrary element of a $\sigma$-closed forcing notion $\mathbb P$. We need to come up with a winning strategy for {\tt Player II}. By the inductive principle, assume it's the $n$th turn and we have,
\begin{enumerate}
 \item the first $n+1$ plays of {\tt Player I}, $\{\dot\alpha_i:i\leq n\}$;
 \item {\tt Player II}'s responses $\{\beta_i:i<n\}$; and
 \item a sequence $\{q_i:i<n\}\subseteq\mathbb P$ in such a way that $q_i\leq p$ and $q_i\Vdash\dot\alpha_i=\beta_i$; moreover, whenever $i+1<n$, $q_{i+1}\leq q_i$.
\end{enumerate}
 Hence, we fix $q_n\in\mathbb P$, a lower bound of $\{q_i:i<n\}$, and an ordinal number $\beta_n$ with $q_n\Vdash\dot\alpha_n=\beta_n$. Naturally, $\beta_n$ is the $n$th play of {\tt Player II} in $\Game(\mathbb P,p)$. Let's argue now that if the second player follows this strategy, he wins the match.
 
 Using the hypothesis that $\mathbb P$ is $\sigma$-closed, we can find a $q\in\mathbb P$ such that for all $n<\omega$, $q\leq q_n$. In particular, $q\leq p$ and $q\Vdash\dot\alpha_n=\beta_n$ for every $n$, and hence we are done.
\end{proof}

 Assume that $\mathbb P$ is a ccc forcing notion. We claim that if $p\in\mathbb P$ and $\dot\alpha$ is a $\mathbb P$-name with $p\Vdash\dot\alpha\in\ON$, then there is $B$, a countable set of ordinals, with $p\Vdash\dot\alpha\in B$ (in other words, there are only countably many different values for $\dot\alpha$). Indeed, let's begin by noticing that $$D:=\{q\in\mathbb P:\exists\beta\in\ON\ (q\Vdash\dot\alpha=\beta)\}$$ is dense below $p$. If we pick a maximal antichain in $D$, say $A$, then our hypothesis on $\mathbb P$ implies that $A$ is countable. Notice that for every $q\in A$, there exists $\beta_q\in\ON$ such that $q\Vdash\dot\alpha=\beta_q$. Being $A$ countable, immediately we have that $B:=\{\beta_q:q\in A\}$ is a countable set of ordinal numbers. In order to prove that $p\Vdash\dot\alpha\in B$, we will argue that $\{r\in\mathbb P:r\Vdash\dot\alpha\in B\}$ is dense below $p$.
 
 With this in mind, notice that for every $t\leq p$ there corresponds a $q\in\mathbb P$ and an ordinal number $\beta$ such that $q\leq t$ and $q\Vdash\dot\alpha=\beta$. By the maximality of $A$ in $D$, we have that some element $u\in A$ must be compatible with $q$. Let $t\leq q,p$ be a witness of this compatibility. Since $s\leq u\in A$, $s$ also forces that $\dot\alpha=\beta_u\in B$.

\begin{lemma}
 Every ccc forcing notion is proper.
\end{lemma}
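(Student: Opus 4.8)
The plan is to build on the observation established in the paragraph immediately preceding the statement: if $\mathbb P$ is ccc, $p\in\mathbb P$, and $p\Vdash\dot\alpha\in\ON$, then there is a nonempty countable set of ordinals $B$ with $p\Vdash\dot\alpha\in B$. Granting this, I would take an arbitrary $p\in\mathbb P$ and produce an explicit winning strategy for {\tt Player II} in the game $\Game(\mathbb P,p)$, whose final witness will simply be $q:=p$.

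First I would fix a surjection $\pi\colon\omega\to\omega\times\omega$ with the extra property that $\pi(n)=(i,k)$ implies $i\le n$; such a $\pi$ is routine to produce, e.g.\ by listing, for each successive $n$, all of the finitely many pairs $(i,k)$ with $\max\{i,k\}\le n$ that have not been listed yet. Next I would describe the strategy. As soon as {\tt Player I} plays $\dot\alpha_i$ (that is, at turn $i$), {\tt Player II} uses the cited fact to fix a nonempty countable set $B_i$ with $p\Vdash\dot\alpha_i\in B_i$, together with an enumeration $B_i=\{b_i^k\colon k<\omega\}$ (repeating entries if $B_i$ happens to be finite). Then, at turn $n$, writing $\pi(n)=(i,k)$ --- so $i\le n$ and hence $B_i$ together with its enumeration has already been fixed --- {\tt Player II} plays $\beta_n:=b_i^k$.

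To verify this is a winning strategy, I would set $q:=p$ and fix an arbitrary $n<\omega$. Since $\pi$ is onto, for every $k<\omega$ there is some $m<\omega$ with $\pi(m)=(n,k)$, and then $\beta_m=b_n^k$; thus $B_n\subseteq\{\beta_m\colon m<\omega\}$. Because $p\Vdash\dot\alpha_n\in B_n$, it follows that $p\Vdash\exists k<\omega\,(\dot\alpha_n=\beta_k)$, and since $n$ was arbitrary, $p\Vdash\forall n<\omega\,\exists k<\omega\,(\dot\alpha_n=\beta_k)$ --- precisely the condition for {\tt Player II} to win. As $p\in\mathbb P$ was arbitrary, $\mathbb P$ is proper.

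The main obstacle --- and really the only point beyond bookkeeping --- is that {\tt Player II} learns the set $B_n$ of possible values of $\dot\alpha_n$ only at turn $n$, yet must arrange, over the whole $\omega$-length run, that every element of every $B_i$ occurs among the $\beta$'s. The surjection $\pi$ with $\pi(n)=(i,k)\Rightarrow i\le n$ is exactly the device that lets this diagonal enumeration be carried out ``in real time''; everything else reduces to the ``countably many values'' fact already proved in the text.
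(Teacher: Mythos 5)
Your proposal is correct and is essentially the paper's own argument: both rely on the ``countably many possible values'' observation preceding the lemma and both organize the play via a surjection onto $\omega\times\omega$ whose $n$th value has first coordinate at most $n$, finishing with the witness $q:=p$. The only difference is cosmetic bookkeeping (you enumerate each $B_i$ as $\{b_i^k\colon k<\omega\}$ while the paper indexes the enumeration by the vertical line $V_i$).
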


\begin{proof}
 For every $n<\omega$, define the vertical line $V_n:=\{n\}\times\omega$ and $W_n:=\bigcup\{V_k:k\leq n\}$. We now fix a bijection $f\colon\omega\to\omega\times\omega$ that fulfills the condition
\begin{align}\label{eq:star2}
 \text{for all }n<\omega,\ f(n)\in W_n.
\end{align}

 Let $p\in\mathbb P$ and $n<\omega$. Similar to the proof of Lemma~\ref{lemma:sigma}, suppose we are currently at the $n$th turn and we already have the following.
\begin{enumerate}
 \item {\tt Player I} has played $\{\dot\alpha_k:k\leq n\}$;
 \item for all $k<n$, the collection of ordinals $\{\beta_i:i\in W_k\}$ satisfies $$p\Vdash\exists i\in W_k\ (\dot\alpha=\beta_i);$$ and
 \item {\tt Player II} has responded with $\{\beta_{f(k)}:k<n\}$.
\end{enumerate}

\begin{center}
\begin{tabular}{c|c c c c} 
 {\tt Player I} & $\dot\alpha_0$ & $\dot\alpha_1$ & \dots & $\dot\alpha_n$ \\ 
 \hline
 {\tt Player II} & $\beta_{f(0)}$ & $\beta_{f(1)}$ & \dots & \framebox(15,6){} 
\end{tabular}
\end{center}

 We need to choose the move for {\tt Player II}. By the remark preceding this lemma and the observation that $p\Vdash\dot\alpha_n\in\ON$, there is some countable set of ordinals $B_n$ such that $p\Vdash\dot\alpha_n\in B_n$. Since $V_n$ has size $\aleph_0$, we can use it as a set of indices to enumerate (possibly with repetitions) $B_n$, i.e., $B_n=\{\beta_i:i\in V_n\}$. Then we apply (\ref{eq:star2}) to acquire the ordinal number $\beta_{f(n)}$ and cast it as {\tt Player II}'s $n$th play.

 Let's argue that the strategy described above is a winning strategy for the second player. Start by setting $q:=p$ to obtain $q\leq p$. On the other hand, $f$'s surjectivity, together with (2), implies that $p\Vdash\exists k<\omega\ (\dot\alpha_n=\beta_{f(k)})$. Finally, this strategy guarantees {\tt Player II} comes out as victor.
\end{proof}

\section{Todorc\v{e}vi\'{c}'s Lemma}

 This section is entirely dedicated to proving a very important lemma needed to show that PFA implies OCA in the next section. We begin by introducing some notation.
 
\begin{definition}
 For any $Y\subseteq\mathbb R$ and $K\subseteq[Y]^2$, we define the set $$\mathbb P_Y(K):=\{p\in[Y]^{<\omega}:[p]^2\subseteq K\}.$$ In other words, $\mathbb P_Y(K)$ consists of all finite subsets of $Y$ which are homogeneous with respect to the coloring $K$.
\end{definition}

 Observe that $\mathbb P_Y(K)$ forms a nonempty (clearly $\emptyset\in\mathbb P_Y(K)$) partial order with the reverse inclusion.
 
 The rest of the present section is devoted to the proof of Lemma~\ref{lemma:todorcevic} and having that in mind, we will use the symbol $\mathbb N$ to denote the set $\omega\setminus\{0\}$. Hence, given $n\in\mathbb N$ and a set $X$, the symbol $X^n$ will denote the collection of all $n$-tuples in $X$ and, at the same time, the collection of all functions from the ordinal $n$ into $X$. Naturally, when $X\subseteq\mathbb R$, $X^n$ will be considered as a subspace of the Euclidean space $\mathbb R^n$.
 
\begin{lemma}[Todorc\v{e}vi\'{c}]\label{lemma:todorcevic}
 Suppose that $X\subseteq\mathbb R$ and that $K_0\subseteq[X]^2$ is open in $X$. If $X$ is not the union of less than $\mathfrak c$ 1-homogeneous sets (recall the second paragraph of Definition~\ref{def:color}), then there is a set $Y\subseteq X$ of size $\mathfrak c$ such that for any pairwise disjoint family $\mathcal A\in[\mathbb P_Y(K_0)]^\mathfrak c$, there exist two distinct elements in $\mathcal A$ whose union is 0-homogeneous.
\end{lemma}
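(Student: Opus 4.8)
The plan is to introduce an ideal of ``negligible'' subsets of $X$, build a rapidly splitting scheme of non‑negligible sets with shrinking geometry and with a positive color across siblings, read $Y$ off from the branches of the scheme, and conclude with a $\Delta$‑system/pigeonhole argument. I begin by fixing largeness notation: call $Z\subseteq X$ \emph{small} if $Z$ is the union of fewer than $\mathfrak c$ $1$‑homogeneous sets, and \emph{large} otherwise; so $X$ is large by hypothesis, and I may assume $\mathfrak c$ is regular, as in the intended application. The basic facts I would record are: the small sets form an ideal closed under subsets and under unions of fewer than $\cf(\mathfrak c)$ of its members; every set of size $<\mathfrak c$ is small (a union of singletons), so every large set has size $\mathfrak c$; a $1$‑homogeneous set is small, hence every large set contains a pair of $K_0$; and for large $Z$ the set $Z^{*}$ of points $x$ such that $Z\cap U$ is large for every neighbourhood $U\ni x$ is itself large (its complement in $Z$ is, by Lindel\"of, a countable union of small sets), relatively closed in $Z$, and \emph{self‑condensed}: every point of $Z^{*}$ has the same condensation property with respect to $Z^{*}$. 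I also fix an open symmetric $W\subseteq\mathbb R^2$ with $K_0^{\sharp}=W\cap X^2$, and recall from Lemma~\ref{lemma:3} that for finite $0$‑homogeneous $p$ the set of $x$ with $p\cup\{x\}$ $0$‑homogeneous is relatively open.

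Next I would build the scheme. By recursion on the levels of the tree $\omega^{<\omega}$ I want basic open intervals $I_s$ and large sets $X_s\subseteq X\cap I_s$ ($s\in\omega^{<\omega}$) so that $X_{\langle\rangle}$ is the self‑condensed core of $X$; $\operatorname{diam} I_s\to 0$ along each branch; for $k\neq k'$ the closures $\overline{I_{s^{\frown}k}}\subseteq I_s$ are pairwise disjoint; $X_{s^{\frown}k}\subseteq X_s\cap I_{s^{\frown}k}$; \emph{siblings are positively joined}, i.e.\ $X_{s^{\frown}k}\times X_{s^{\frown}k'}\subseteq K_0^{\sharp}$ whenever $k\neq k'$; and \emph{no mass leaks}, i.e.\ $X_s\setminus\bigcup_k X_{s^{\frown}k}$ is small. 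Granting such a scheme, the leakage at each level is a countable union of small sets, hence small, so $X_\infty:=\bigcap_\ell\bigcup_{|s|=\ell}X_s$ contains $X_{\langle\rangle}$ minus a small set and is therefore large, in particular of size $\mathfrak c$. Because the intervals at a common level have pairwise disjoint closures, every $y\in X_\infty$ lies in $X_{b\restriction\ell}$ for all $\ell$ along a unique branch $b$, and then $\{y\}=X\cap\bigcap_\ell I_{b\restriction\ell}$; thus $b\mapsto y_b$ injects a set of $\mathfrak c$ branches into $X$, and I take $Y:=\{y_b:b\}$, a subset of $X$ of cardinality $\mathfrak c$.

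The crux — and the step I expect to be by far the hardest, matching the ``long and technical'' billing — is the inductive step that produces the children of a node, namely the claim that \emph{every large $Z\subseteq X$ can, for each $\varepsilon>0$, be written as a small set together with countably many large sets $Z_k$ of diameter $<\varepsilon$ whose supporting intervals have pairwise disjoint closures and with $Z_k\times Z_{k'}\subseteq K_0^{\sharp}$ for $k\neq k'$}. Replacing $Z$ by its self‑condensed core one may assume $Z$ is self‑condensed; then, since a large self‑condensed set is not $1$‑homogeneous, it contains a $K_0$‑pair, which — using that $K_0$ is open and that $Z$ condenses at each of its points — refines to two large pieces inside small intervals that are positively joined, and one attempts to list the $Z_k$ one at a time. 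The real difficulty is to keep each new piece positively joined to \emph{all} the previously chosen ones while still exhausting $Z$ modulo a small set; this is a Ramsey‑type productivity statement about open colorings of large subsets of $\mathbb R$. I would organize it as a dichotomy: either this refinement of $Z$ exists, or the obstruction to it yields a covering of $Z$ by fewer than $\mathfrak c$ $1$‑homogeneous sets which, propagated back up the scheme, covers $X$ by fewer than $\mathfrak c$ $1$‑homogeneous sets, contradicting the hypothesis.

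Finally I would verify that $Y$ works. Given a pairwise disjoint $\mathcal A\in[\mathbb P_Y(K_0)]^{\mathfrak c}$, write each $a\in\mathcal A$ as $\{y_{b^a_1},\dots,y_{b^a_{k_a}}\}$ with $y_{b^a_1}<\cdots<y_{b^a_{k_a}}$, let $n_a$ be least such that $b^a_1\restriction n_a,\dots,b^a_{k_a}\restriction n_a$ are pairwise distinct, and put $\sigma_a=(b^a_1\restriction n_a,\dots,b^a_{k_a}\restriction n_a)$. The assignment $a\mapsto(k_a,n_a,\sigma_a)$ has countable range, so some fibre contains two distinct members $a\neq a'$ with common data $(k,n,(s_1,\dots,s_k))$; I claim $a\cup a'$ is $0$‑homogeneous. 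Pairs inside $a$ or inside $a'$ lie in $K_0$ because $a,a'\in\mathbb P_Y(K_0)$. For a cross pair $\{y_{b^a_i},y_{b^{a'}_j}\}$ with $i\neq j$ we have $s_i\neq s_j$, their longest common initial segment $u$ has length $<n$, and $b^a_i,b^{a'}_j$ pass through different immediate successors of $u$, so the two points lie in two distinct sibling sets and the pair is in $K_0$ by ``siblings positively joined''. For a cross pair with $i=j$ we have $b^a_i\neq b^{a'}_i$ (otherwise $\mathcal A$ would not be disjoint) while they agree up to level $n$, so they split at some later level and the same argument applies. Hence every pair from $a\cup a'$ lies in $K_0$, so $a\cup a'$ is $0$‑homogeneous, as required.
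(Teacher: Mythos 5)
Your architecture hinges entirely on the step you yourself flag as the crux: that every ``large'' $Z$ splits, modulo a small set, into countably many (or even just two) large pieces $Z_k$ of small diameter with $Z_k\times Z_{k'}\subseteq K_0^{\sharp}$ for $k\neq k'$. You do not prove this, and in fact it is not provable in ZFC, so the ``dichotomy'' you gesture at cannot be made to work. The reason is that your scheme, were it constructible, would deliver far more than the lemma asserts: the set $X_\infty$ you extract is itself $0$-homogeneous of size $\mathfrak c$, since any two of its points lie on distinct branches, those branches separate at some node into two positively joined siblings, and hence every pair from $X_\infty$ lands in $K_0$. Your argument would therefore prove in ZFC that whenever $X$ is not the union of fewer than $\mathfrak c$ $1$-homogeneous sets, $X$ contains a $0$-homogeneous set of size $\mathfrak c$. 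Under CH, ``fewer than $\mathfrak c$'' coincides with ``countably many'', so this statement is exactly OCA; that is, ZFC$+$CH would prove OCA. But OCA implies $\mathfrak b=\aleph_2$ (Theorem~\ref{theo:b=omega_2}) and hence $\neg$CH, so ZFC$+$CH would be inconsistent, contradicting the consistency of CH. Consequently, in some model of CH your splitting claim must fail for some large $Z$, and the same objection defeats every variant of the idea: even a binary scheme in which siblings are positively joined already yields a size-$\mathfrak c$ $0$-homogeneous set.

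The underlying misjudgment is about the strength of the conclusion: the lemma does not produce any large $0$-homogeneous set in the ground model; it only produces the exact combinatorial property of $Y$ needed to make $\mathbb P_Y(K_0)$ ccc, and the uncountable $0$-homogeneous set appears only later, via forcing and PFA. The paper's proof accordingly never attempts a Cantor scheme. It builds $Y=\{y_\xi:\xi<\mathfrak c\}$ by a transfinite recursion of length $\mathfrak c$ that diagonalizes against all closed $1$-homogeneous subsets of $X$ and against all $1$-homogeneous ``oscillation'' sets $o(f_\eta,z)$ attached to countable dense subfunctions of the maps $\vec p\restriction n\mapsto\vec p(n)$, and then verifies the disjoint-family property by induction on the common size $n$ of the members of $\mathcal A$; the reduction of $\hat{\mathcal A}$ to a subfamily of a fixed size $n$ is essentially the only ingredient your write-up shares with it. Your closing $\Delta$-system and pigeonhole verification is sound combinatorics, but it rests on a tree that cannot be built.
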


\begin{proof}
 Let $X$ and $K_0$ be like in the statement of the lemma. For any $n\in\mathbb N$ and $x\in X^n$, we define the following.
\begin{enumerate}
 \item $\mathcal O(x)$ is the family of open sets in $X^n$ containing $x$;
 \item for all $U\in\mathcal O(x)$, $$\mathcal W(x,U):=\{y\in U:\forall i,j<n\ \{x(i),y(j)\}\in K_0\};$$
 \item given $S\subseteq X^n$ and $g\colon S\to X$, the {\sl oscillation of $g$ at $x$} is defined as $$o(g,x):=\bigcap\left\{\cl_Xg[S\cap\mathcal W(x,U)]:U\in\mathcal O(x)\right\}.$$
\end{enumerate}

 Concerning these definitions, one direct observation is that whenever $S$ happens to be countable, $\left|X^S\right|\leq|X|^\omega\leq\mathfrak c^\omega=\mathfrak c$. On the other hand, $\left|[X^n]^{\leq\omega}\right|\leq\left|X^n\right|^\omega\leq\mathfrak c^\omega=\mathfrak c$. Both these observations yield that the collection of countable functions which are subsets of some product of the form $(X^n)\times X$ has at most $\omega\cdot\mathfrak c\cdot\mathfrak c=\mathfrak c$ elements. Hence, we can enumerate it as
 
\begin{align}\label{eq:enum_f}
 \bigcup_{n=1}^\infty\left\{X^S:s\in[X^n]^{\leq\omega}\right\}=\{f_\xi:\xi<\mathfrak c\}.
\end{align}
 
\begin{claim}\label{claim:Wisopen}
 The sets $\mathcal W(x,X^n)$ are all open in $X^n$.
\end{claim} 
 
\begin{claimproof}
 Take $x\in X^n$. According to Lemma~\ref{lemma:3}, the set $$U:=\bigcap_{j<n}\left\{y\in X:\{x(j),y\}\in K_0\right\}$$ is open in $X$. It is therefore sufficient to argue that $U^n=\mathcal W(x,X^n)$. We do this via a double inclusion.
 
 For every $y\in U^n\subseteq X^n$ and $i<n$, we have that $y(i)\in U$, that is, for every $i,j<n$, $y(i)\in U\subseteq\{z\in X:\{x(j),y(i)\}\in K_0\}$. Finally, $y\in\mathcal W(x,X^n)$.
 
 If we now take $y\in\mathcal W(x,X^n)$, we particularly have that $y\in X^n$ and that for all $i<n$, $y(i)\in U$. And thus $y\in U^n$.
\end{claimproof}
  
 Next we notice that, since $\mathbb R$ is a second countable topological space, $X$ must have, as a subspace of the reals, at most $\mathfrak c$ closed sets. In particular, we can enumerate all closed subsets of $X$ that happen to be 1-homogeneous as $\{H_\xi:\xi<\mathfrak c\}$.
 
\begin{claim}\label{claim:1}
 There is a set $\{y_\xi:\xi<\mathfrak c\}$ such that for all $\alpha<\mathfrak c$,
 \begin{enumerate}[label=(\roman*)]
  \item $y_\alpha\in X\setminus\{y_\xi:\xi<\alpha\}$;
  \item $y_\alpha\notin\bigcup\{H_\xi:\xi<\alpha\}$;
  \item for all $z\in\{y_\xi:\xi<\alpha\}^{<\omega}$ and $\eta<\alpha$, if for some positive integer $n$, $\dom(f_\eta)\subseteq X^n$, $\dom(z) = n$ (recall that we are identifying $n$-tuples with functions whose domain is the ordinal $n$), and $o(f_\eta,z)$ is 1-homogeneous, then $y_\alpha\notin o(f_\eta,z)$.
 \end{enumerate}
\end{claim}

\begin{claimproof}
 We will construct the set recursively. Assume we have already constructed $\{y_\xi:\xi<\alpha\}$, for some $\alpha<\mathfrak c$, with the aforementioned properties. First, notice that for all $\xi<\alpha$, the set $[\{y_\xi\}]^2=\emptyset$ is trivially disjoint from $K_0$; in other words, the collection $$\mathcal H_1:=\left\{\{y_\xi\}:\xi<\alpha\right\}$$ consists of 1-homogeneous sets. Let us also define $$\mathcal H_2:=\{H_\xi:\xi<\alpha\}$$ and $\mathcal H_3$ as the collection of all 1-homogeneous sets of the form $o(f_\eta,z)$, where $z\in\{y_\xi:\xi<\alpha\}^n$, $\eta<\alpha$, and $\dom(f_\eta)\subseteq X^n$.
 
 It is then straightforward to see that each of these three collections has fewer that $\mathfrak c$ elements, which, in turn, implies that there must be some $y_\alpha\in X\setminus\bigcup(\mathcal H_1\cup\mathcal H_2\cup\mathcal H_3)$. By the construction of the sets $\mathcal H_i$ for $i\in\{1,2,3\}$, $y_\alpha$ has the desired properties. This finishes the recursion.
\end{claimproof}

 We now claim that $Y:=\{y_\xi:\xi<\mathfrak c\}$ accomplishes what the lemma concludes. Moreover, we make the following rather insightful remark about this proof as a whole, which summarizes exactly what is important about the set $Y$.

\begin{remark}\label{remark:afirm3}
 Any set $\{y_\xi : \xi<\mathfrak c\}$ satisfying (i), (ii) and (iii) of Claim~\ref{claim:1} fulfills the conclusion of Lemma~\ref{lemma:todorcevic}.
\end{remark}
 
 Immediately, (i) of Claim~\ref{claim:1} implies that the size of $Y$ is precisely $\mathfrak c$. All that remains to conclude our argument is to take a pairwise disjoint family $\hat{\mathcal A}\subseteq\mathbb P_Y(K_0)$ of size continuum and conclude the existence of two distinct elements in the family whose union is 0-homogeneous. The following claim argues that this sought after conclusion holds if it does so for all families whose members are of the same size.

\begin{claim}\label{claim:n_enough}
 If $\hat{\mathcal A}\subseteq\mathbb P_Y(K_0)$ is pairwise disjoint and of cardinality continuum, then there exist $\mathcal A\subseteq\hat{\mathcal A}$ and $n\in\mathbb N$ with $|\mathcal A| = \mathfrak c$ and $\mathcal A\subseteq[Y]^n$.
\end{claim}

\begin{claimproof}
 Assuming that $\hat{\mathcal A}$ is as described in the Claim, define, for each $n<\omega$, $\mathcal A_n:=\{p\in\hat{\mathcal A}:|p|=n\}$ and observe that $$\hat{\mathcal A}=\bigcup_{n<\omega}\mathcal A_n.$$ Next, by König's Lemma, we know that $\mathfrak c$ has uncountable cofinality, which by Lemma~\ref{lemma:2} implies the existence of $n<\omega$ such that $|\mathcal A_n|=\mathfrak c$. Obviously $n$ must be a positive integer since $\mathcal A_0\subseteq\{\emptyset\}$. Finally, $\mathcal A:=\mathcal A_n$ satisfies the conclusion of the Claim.
\end{claimproof}

 With the notation used in the statement of Claim~\ref{claim:n_enough}, if $\mathcal A$ has two distinct members whose union is $0$-homogeneous, then $\hat{\mathcal A}$ has them too. In other words, we only need to verify by induction on $n$ that if $\mathcal A$ is a pairwise disjoint subset of $\mathbb P_Y(K_0)$ with $|\mathcal A| = \mathfrak c$ and $\mathcal A\subseteq[Y]^n$, then there are $p,q\in\mathcal A$ for which $p\neq q$ and $p\cup q$ is $0$-homogeneous.
 
 When $n=1$, we have that $A:=\bigcup\mathcal A$ has cardinality $\mathfrak c$. Our choice of $\mathcal A$ implies that for all $\alpha<\mathfrak c$, $A\cap H_\alpha$ is a subset of $Y\cap H_\alpha$, which in turn (see (ii) of Claim~\ref{claim:1}) is contained in $\{y_\xi:\xi\leq\alpha\}$, a set with fewer than $\mathfrak c$ elements. As a consequence, $A$ is not a subset of any $H_\alpha$. Thus, the relation $A\subseteq\cl_X(A)$ implies that $\cl_X(A)$ is a closed subset of $X$ which does not belong to $\{H_\xi : \xi<\mathfrak c\}$, in other words, the closure of $A$ in $X$ is not $1$-homogeneous.
 
 By Lemma~\ref{lemma:3}(2), $A$ fails to be $1$-homogeneous. Then, we can find two members of $A$, let's say $u$ and $v$, in such a way that $\{u,v\}\in K_0$. In conclusion, by letting $p:=\{u\}$ and $q:=\{v\}$ we get two distinct elements of $\mathcal A$ whose union is $0$-homogeneous. This completes the base of our finite induction.
 
 For the inductive step, suppose $\mathcal A\subseteq\mathbb P_Y(K_0)$ is a pairwise disjoint family of $\mathfrak c$ subsets of $Y$ all of size $n+1$. For every $p\in\mathcal A$, one gets $p\subseteq\{y_\xi : \xi<\mathfrak c\}$ and thus, there is a function $e(p)\colon n+1\to\mathfrak c$ satisfying the following conditions. 
\begin{enumerate}[label=(\Roman*)]
 \item $p=\{y_{e(p)(i)}:i\leq n\}$ and
 \item if $i<j\leq n$ then $e(p)(i)<e(p)(j)$ (in other words, $e(p)$ is strictly increasing).
\end{enumerate}
Moreover, we define $\vec p\colon n+1\to X$ by letting $\vec p(i):=y_{e(p)(i)}$. We have the following immediate observation.

\begin{align}\label{eq:star}
 \vec p\colon n+1\to p\text{ is a bijection}.
\end{align}

 Let $S:=\{\vec p\restriction n:p\in\mathcal A\}\subseteq X^n$. Given two different elements $p,q\in\mathcal A$, by our hypothesis on $\mathcal A$, $p\cap q=\emptyset$. Since also $\vec p(0)\in p$ and $\vec q(0)\in q$, clearly $\vec p\restriction n\neq\vec q\restriction n$ (recall that $n>0$). This observation allows us to define a function $g\colon S\to X$ by setting $g(\vec p\restriction n):=\vec p(n)$.
 
 By the above definitions, it makes sense to consider the family $$\mathcal A':=\{p\in\mathcal A:\vec p(n)\in o(g,\vec p\restriction n)\}.$$

\begin{claim}\label{claim:2}
 The set $\mathcal A\setminus\mathcal A'$ has (strictly) fewer than $\mathfrak c$ elements.
\end{claim}

\begin{claimproof}
 By the second countable property of the reals, we begin by taking a countable base $\mathcal B$ for $X$ and, similarly, a countable base $\mathcal B^*$ for $X^n$.
 
 Fix an arbitrary element $p\in\mathcal A\setminus\mathcal A'$, that is, $p\in\mathcal A$ and $\vec p(n)\notin o(g,\vec p\restriction n)$. By the definition of oscillation, we can find $U\in\mathcal O(\vec p\restriction n)$ and $B_p\in\mathcal B$ such that 
 
\begin{align}\label{eq:diam}
 \vec p(n)\in B_p\subseteq X\setminus\cl_X g[S\cap\mathcal W(\vec p\restriction n,U)].
\end{align}

\noindent Hence, there exists $B_p^*\in\mathcal B^*$ with the property that $\vec p\restriction n\in B_p^*\subseteq U$. This last inclusion guarantees that $\mathcal W(\vec p\restriction n,B_p^*)\subseteq\mathcal W(\vec p\restriction n,U)$ and so, by using (\ref{eq:diam}) and some routine arguments, we obtain that
 
\begin{align*}
 B_p\subseteq X\setminus\cl_X g[S\cap\mathcal W(\vec p\restriction n,B_p^*)]\subseteq X\setminus g[S\cap\mathcal W(\vec p\restriction n,B_p^*)].
\end{align*}

 \noindent Therefore,

\begin{align}\label{eq:m}
 g[S\cap\mathcal W(\vec p\restriction n,B_p^*)]&\subseteq X\setminus B_p.
\end{align}

 With the goal of reaching a contradiction, let us suppose that $\mathcal A\setminus\mathcal A'$ has size continuum. Since $|\mathcal B| \leq \aleph_0 < \cf(\mathfrak c)$ and $$\mathcal A\setminus\mathcal A'=\bigcup_{B\in\mathcal B}\{p\in\mathcal A\setminus\mathcal A':B_p=B\},$$ we can invoke Lemma~\ref{lemma:2} to get some $B\in\mathcal B$ such that $\mathcal A_1:=\{p\in\mathcal A\setminus\mathcal A':B_p=B\}$ has $\mathfrak c$ elements. An analogous argument produces $B^*\in\mathcal B^*$ with the property that $\mathcal A_2:=\{p\in\mathcal A_1:B_p^*=B^*\}$ has size continuum.
 
 In summary, (\ref{eq:m}) gives us the following statement.
 
\begin{align}\label{eq:dagger}
\begin{split}
 \text{If } p\in\mathcal A_2,\text{ then }\vec p\restriction n\in B^*,\  \vec p(n)\in B,\text{ and }\\ g[S\cap\mathcal W(\vec p\restriction n,B^*)]\subseteq X\setminus B.
\end{split}
\end{align}

 By (\ref{eq:star}) and the fact that $\mathcal A_2$ is a pairwise disjoint family, we have that $\vec p[n]$ and $\vec q[n]$ are disjoint sets of size $n$. Moreover, $$\{\vec p[n]:p\in\mathcal A_2\}\subseteq\mathbb P_Y(K_0)$$ and, as a consequence, our inductive hypothesis gives us that there must be two different elements of $\mathcal A_2$, let's say $p$ and $q$, such that $\vec p[n]\cup\vec q[n]$ is 0-homogeneous. In particular, $p$ and $q$ are disjoint sets.
 
 According to (\ref{eq:dagger}), $\vec q\restriction n\in B^*$. On the other hand, the way we selected $p$ and $q$ implies that for all $i<n$, $\{\vec p(i),\vec q(i)\}\in K_0$. Therefore, $\vec q\restriction n\in\mathcal W(\vec p\restriction n,B^*)$. But we also have that $\vec q\restriction n\in S$ and so, again by (\ref{eq:dagger}), $\vec q(n)=g(\vec q\restriction n)\notin B$, which is a contradiction to the fact that $q\in\mathcal A_2$ (once again, see (\ref{eq:dagger})). This concludes the proof of the claim.
\end{claimproof}

 By basic cardinal arithmetic, a direct corollary of Claim~\ref{claim:2} is that $\mathcal A'$ has size continuum.
 
 The reals are a second countable topological space, so any subspace of a finite product of subspaces of $\mathbb R$ also has this property. In particular, $g\subseteq (X^n)\times X$ is a separable space, so we can assume there is an $h\subseteq g$ that is countable and dense in $g$ (hence, $h$ is a function of countable domain). Recalling the enumeration at (\ref{eq:enum_f}), $h=f_\eta$ for some $\eta<\mathfrak c$.
 
 Notice that $\dom(f_\eta)\subseteq\dom(g)=S=\{\vec p\restriction n:p\in\mathcal A\}$ and, moreover, that the set $$M:=\bigcup\left\{\img(e(p)):p\in\mathcal A\land\vec p\restriction n\in\dom(f_\eta)\}\cup\{\eta\right\}$$ is a countable subset of $\mathfrak c$. Since the family $\mathcal A'$ is pairwise disjoint (because it is a subset of $\mathcal A$), the function from $\mathcal A'$ into $\mathfrak c$ given by $p\mapsto e(p)(0)$ is one-to-one. Considering that $M$ is countable, there must exist some $t\in\mathcal A'$ such that $e(t)(0)$ is an upper bound of $M$, i.e.,

\begin{align}\label{eq:trebol}
 M\subseteq e(t)(0).
\end{align} 
 
\begin{claim}\label{claim:3}
 $\vec t(n)\in o(f_\eta,\vec t\restriction n)$.
\end{claim}

\begin{claimproof}
 Keeping in mind the definition of oscillation, we take arbitrary sets $U\in\mathcal O(\vec t\restriction n)$ and $V\in\tau_X(\vec t(n))$. By the fact $t\in\mathcal A'$, we have that $\vec t(n)\in o(g,\vec t(n))$, that is, $\vec t(n)\in\cl_X g[S\cap\mathcal W(\vec t\restriction n,U)]$. As a consequence of basic properties of the closure, there must be some $y\in S\cap\mathcal W(\vec t\restriction n,U)$ such that $g(y)\in V$.
 
 By Claim~\ref{claim:Wisopen}, $U^*:=U\cap\mathcal W(\vec t\restriction n,X^n)$ is an open subset of $X^n$. Moreover, for every $i,j<n$, $\{t(i),y(j)\}\in K_0$ and therefore $y\in U^*$. We then have that $(y,g(y))\in(U^*\times V)\cap g$, so the latter is a nonempty open set in $g$. By the density of $f_\eta$, we can find some $z\in\dom(f_\eta)\cap U^*$ such that $f_\eta(z)\in V$.
 
 Given that $z\in U^*$, one deduces that $z\in\mathcal W(\vec t\restriction n,U)$. As a consequence, $z$ is an element of $\dom(f_\eta)\cap\mathcal W(\vec t\restriction n,U)$ satisfying that $f_\eta(z)\in V$. This proves that $V$ has nonempty intersection with $f_\eta[\dom(f_\eta)\cap\mathcal W(\vec t\restriction n,U)]$, and so we are done with this claim.
\end{claimproof}

 In conclusion, $y_{e(t)(n)}=\vec t(n)\in o(f_\eta,\vec t\restriction n)$ and by (\ref{eq:trebol}) and the choice of $e$, $\eta\in M\subseteq e(t)(0)<e(t)(n)$. Hence, $\eta<e(t)(n)$.
 
 If we now apply (iii) of Claim~\ref{claim:1} to $\alpha:=e(t)(0)$, we get that $y_\alpha\in o(f_\eta,\vec t\restriction n)$ but $y_\alpha\notin o(f_\eta,z)$, so the oscillation $o(f_\eta,\vec t\restriction n)$ cannot be 1-homogeneous. In other words, two elements $x_0,x_1\in o(f_\eta,\vec t\restriction n)$ are such that $\{x_0,x_1\}\in K_0$ which, since $K_0$ is open in $X$, means we can find open sets $V_0$ and $V_1$ such that $(x_0,x_1)\in V_0\times V_1\subseteq K_0^\sharp$.
 
 Now since $x_0\in o(f_\eta,\vec t\restriction n)$, $V_0\in\tau_X(x_0)$, and obviously $X^n\in\mathcal O(\vec t\restriction n)$, $V_0$ must have nonempty intersection with $f_\eta[\dom(f_\eta)\cap\mathcal W(\vec t\restriction n,X^n)]$, in other words, there exists $p\in\mathcal A$ such that $\vec p\restriction n\in\dom(f_\eta)\cap\mathcal W(\vec t\restriction n,X^n)$ and $f_\eta(\vec p\restriction n)\in V_0$. But $f_\eta\subseteq g$, so $f_\eta(\vec p\restriction n)=g(\vec p\restriction n)=\vec p(n)$. Therefore, $\vec p(n)\in V_0$.

 By Claim~\ref{claim:Wisopen} (clearly, $\vec p\restriction n\in X^n$), $U:=\mathcal W(\vec p\restriction n,X^n)$ is open. Moreover, $\vec p\restriction n\in\mathcal W(\vec t\restriction n,X^n)$ implies that $\vec t\restriction n\in\mathcal W(\vec p\restriction n,X^n)$; in particular, $U\in\mathcal O(\vec t\restriction n)$. As a consequence, $x_1\in o(f_\eta,\vec t\restriction n)\subseteq\cl_X f_\eta[\dom(f_\eta)\cap\mathcal W(\vec t\restriction n,U)]$ and $V_1\in\tau_X(x_1)$; therefore, there is some $q\in\mathcal A$ such that $\vec q\restriction n\in\dom(f_\eta)\cap\mathcal W(\vec t\restriction n,U)$ and $\vec q(n)=g(\vec q\restriction n)\in V_1$.
 
 Since $V_0\times V_1\subseteq K_0^\sharp$, $\vec p(n)\in V_0$ and $\vec q(n)\in V_1$, we have that $V_0\cap V_1=\emptyset$ and also $p\neq q$. On the other hand, $\vec q\restriction n\in\mathcal W(\vec t\restriction n,U)$ implies that $\vec q\restriction n\in U$, and so for all $i,j<n$, $\{\vec p(i),\vec q(j)\}\in K_0$. Finally, $p\cup q$ forms a 0-homogeneous set and hence the proof of Todorc\v{e}vi\'{c}'s Lemma is complete.
\end{proof}

\begin{corollary}\label{coro:todorcevic}
 Let $X\subseteq\mathbb R$ and $K_0\subseteq[X]^2$ be open in $X$. Assume $X$ is not the union of less that continuum many 1-homogeneous sets. Then, under CH, we can produce a set $Y\subseteq X$ of size continuum such that the forcing notion $\mathbb P_Y(K_0)$ is ccc.
\end{corollary}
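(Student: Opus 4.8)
The plan is to use Todorc\v{e}vi\'{c}'s Lemma to produce $Y$ outright and then show that $\mathbb P_Y(K_0)$ cannot have an uncountable antichain, which under CH is exactly the ccc property. First I would apply Lemma~\ref{lemma:todorcevic} to the given $X$ and $K_0$ (the assumption on $X$ is precisely the hypothesis of that lemma) to obtain $Y\subseteq X$ with $|Y|=\mathfrak c$ such that every pairwise disjoint family $\mathcal A\in[\mathbb P_Y(K_0)]^{\mathfrak c}$ contains two distinct members with $0$-homogeneous union. The basic translation to keep in mind is that two conditions $p,q\in\mathbb P_Y(K_0)$ are incompatible (under reverse inclusion) exactly when $p\cup q\notin\mathbb P_Y(K_0)$, i.e., exactly when $p\cup q$ fails to be $0$-homogeneous; so an antichain is a family of finite $0$-homogeneous sets, no two of which have $0$-homogeneous union.

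Next, suppose toward a contradiction that $\mathcal A\subseteq\mathbb P_Y(K_0)$ is an uncountable antichain. Under CH, $\mathfrak c=\aleph_1$, so $|\mathcal A|=\mathfrak c$; this is the one place CH is used, since Lemma~\ref{lemma:todorcevic} requires families of size exactly $\mathfrak c$. The members of $\mathcal A$ need not be pairwise disjoint, so I would refine $\mathcal A$ with a $\Delta$-system argument: writing $\mathcal A=\bigcup_{n<\omega}\{p\in\mathcal A:|p|=n\}$ and invoking Lemma~\ref{lemma:2} (with $\kappa=\mathfrak c$, $\mu=\aleph_0$), some piece of fixed size $n$ has cardinality $\mathfrak c$; then the $\Delta$-system lemma yields $\mathcal A'$ of size $\aleph_1=\mathfrak c$ forming a sunflower with some root $r$. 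Discarding the at most one member of $\mathcal A'$ equal to $r$, the petals $\{p\setminus r:p\in\mathcal A'\}$ are distinct, nonempty and pairwise disjoint, and each is still $0$-homogeneous (a subset of a $0$-homogeneous set is $0$-homogeneous), hence they constitute a pairwise disjoint member of $[\mathbb P_Y(K_0)]^{\mathfrak c}$.

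Applying Lemma~\ref{lemma:todorcevic} to this family of petals gives distinct $p,q\in\mathcal A'$ with $(p\setminus r)\cup(q\setminus r)$ being $0$-homogeneous. To conclude I would verify that $p\cup q$ is then $0$-homogeneous as well: since $p=r\cup(p\setminus r)$ and $q=r\cup(q\setminus r)$ with these unions disjoint, every unordered pair drawn from $p\cup q$ lies inside $p$, inside $q$, or inside $(p\setminus r)\cup(q\setminus r)$, and all three of these sets are $0$-homogeneous. Thus $p\cup q\in\mathbb P_Y(K_0)$, so $p$ and $q$ (distinct, since their petals are) are compatible, contradicting that $\mathcal A'\subseteq\mathcal A$ is an antichain. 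Hence $\mathbb P_Y(K_0)$ has no uncountable antichain, i.e., it is ccc. The only steps with genuine content beyond bookkeeping are the $\Delta$-system reduction to a pairwise disjoint family and the verification that $0$-homogeneity of the petal union lifts to $0$-homogeneity of the full union; everything else is routine.
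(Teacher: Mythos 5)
Your proof is correct and follows essentially the same route as the paper's: apply Lemma~\ref{lemma:todorcevic} to obtain $Y$, refine a putative uncountable antichain via the $\Delta$-system lemma, use CH to ensure the pairwise disjoint family of petals has size $\mathfrak c$, and lift the $0$-homogeneity of $(p\setminus r)\cup(q\setminus r)$ to $p\cup q$. Your added care --- discarding the possible petal equal to the root and explicitly checking that every pair from $p\cup q$ lands in $p$, in $q$, or in the union of the petals --- only makes explicit details the paper leaves implicit.
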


\begin{proof}
 Notice that we have the same hypotheses as in Lemma~\ref{lemma:todorcevic}. Hence, let $Y\subseteq X$ be such a set as in the conclusion of said lemma. Let $\mathcal A\subseteq\mathbb P_Y(K_0)$ be uncountable. We shall show that $\mathcal A$ cannot be an antichain, and thus prove the corollary.
 
 By Sanin's $\Delta$-system lemma (\cite[Theorem~1.5, p.~49]{kunen}), some $\mathcal A_0\subseteq\mathcal A$ of uncountable size has some root $r$. Immediately from this, the collection $\{p\setminus r:p\in\mathcal A_0\}$ is a pairwise disjoint subfamily of $\mathbb P_Y(K_0)$ and, by CH, we can assume it has $\mathfrak c$ elements. By Lemma~\ref{lemma:todorcevic}, there must exist two elements $p,q\in\mathcal A$ such that $p\setminus r\neq q\setminus r$ and $(p\setminus r)\cup(q\setminus r)$ is $0$-homogeneous. But then $p\cup q$ is also $0$-homogeneous, which by the definition of our forcing notion means that $p$ and $q$ are two different compatible elements of $\mathcal A$. Therefore, $\mathbb P_Y(K_0)$ must be ccc.
\end{proof}

\section{Some properties of $\sigma$-closed forcing notions}

 A couple of sections ago, we proved that all $\sigma$-closed forcing notions are proper. In this section we prove some more pertinent properties that we use in the final section.
 
 For the remainder of this section, we assume that $\mathbb P$ is a $\sigma$-closed forcing notion in the ground model $V$ and that $G$ is a $(V,\mathbb P)$-generic filter.
 
 It's a well-known fact that the $\sigma$-closedness of $\mathbb P$ implies that forcing with $\mathbb P$ does not add new sequences of length $\omega$ consisting of sets from the ground model; in other words, whenever $f\in V[G]$ satisfies $f\colon\omega\to V$, we obtain $f\in V$. Hence, intuition dictates that objects from the ground model that can be described using only countably many parameters remain the same in any generic extension given by $\mathbb P$. Parts (2) and (3) of our next result are a formalization of this idea for the specific cases of $\mathbb R$ and $\tau_{\mathbb R}$.
 
\begin{lemma}\label{lemma:A}

 The following three properties are true.

\begin{enumerate}
 \item $\mathbb P$ does not collapse $\aleph_1$, in other words, $\aleph_1^V=\aleph_1^{V[G]}$.
 \item $\mathbb P$ does not add new reals: $\mathbb R^V=\mathbb R^{V[G]}$. In particular, the continuum of $V$ is the same as $V[G]$.
 \item $\mathbb P$ does not create new open subsets of reals, that is, for all $X\in V$ satisfying $X\subseteq\mathbb R$, $(\tau_X)^V=(\tau_X)^{V[G]}$.
\end{enumerate}
\end{lemma}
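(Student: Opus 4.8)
The three claims all rest on the single structural fact recalled just above the lemma: a $\sigma$-closed forcing adds no new $\omega$-sequences of ground model elements. I would prove that fact first (or cite it as recalled), and then derive (1), (2), (3) as successive easy consequences, in that order, since (2) uses (1)-style reasoning about countable objects and (3) uses (2).

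\emph{Proof of the no-new-$\omega$-sequences fact.} Suppose $p \Vdash \dot f \colon \omega \to \check V$. Working in $V$, build a descending chain $p = p_0 \geq p_1 \geq p_2 \geq \cdots$ together with a sequence $(a_n)_{n<\omega}$ of elements of $V$ so that $p_{n+1} \Vdash \dot f(\check n) = \check a_n$: at stage $n$, pick any $p_{n+1} \leq p_n$ deciding the value $\dot f(\check n)$, which is possible since it is forced to lie in $\check V$. By $\sigma$-closedness there is $q$ below every $p_n$; then $q \Vdash \dot f = \check a$ where $a = \{(n, a_n) : n<\omega\} \in V$. Since densely many conditions below $p$ force $\dot f$ to equal some ground model function, $p \Vdash \dot f \in \check V$. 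This shows every such $\dot f$ is (forced to be) in $V$.

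\emph{Deriving (1), (2), (3).} For (1): if $\aleph_1$ were collapsed in $V[G]$ there would be a surjection $\omega \to \omega_1^V$ in $V[G]$, i.e.\ an element of $({}^{\omega}\omega_1^V)^{V[G]}$; by the fact just proved this function lies in $V$, contradicting that $\omega_1^V$ is a cardinal in $V$. Hence $\aleph_1^V = \aleph_1^{V[G]}$. For (2): a real may be coded as an element of ${}^{\omega}\omega$ (or of ${}^{\omega}2$), which is an $\omega$-sequence of ground model elements, so again the fact gives $({}^{\omega}\omega)^{V[G]} = ({}^{\omega}\omega)^V$, whence $\mathbb R^{V[G]} = \mathbb R^V$; as the continuum $\mathfrak c = |\mathbb R|$ and cardinalities of ground model sets are computed the same way (no new bijections to smaller ordinals appear, by the same argument), $\mathfrak c^V = \mathfrak c^{V[G]}$. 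For (3): fix $X \subseteq \mathbb R$ with $X \in V$. We always have $(\tau_X)^V \subseteq (\tau_X)^{V[G]}$ since every open set of $V$ is still open. Conversely, let $A \in (\tau_X)^{V[G]}$. By second countability of $\mathbb R$, fix in $V$ a countable base $\{B_n : n<\omega\}$ for $X$ (this base exists in $V$ and, since it is countable, is unchanged in $V[G]$). In $V[G]$, $A$ is a union of basic sets, i.e.\ $A = \bigcup_{n \in I} B_n$ for some $I = \{n : B_n \subseteq A\} \subseteq \omega$. But $I$, being a subset of $\omega$, is an $\omega$-sequence (its characteristic function) over $V$, hence $I \in V$; therefore $A = \bigcup_{n\in I} B_n$ is an element of $V$ and lies in $(\tau_X)^V$. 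This gives the reverse inclusion, so $(\tau_X)^V = (\tau_X)^{V[G]}$.

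\emph{Expected main obstacle.} The only genuinely delicate point is being careful about \emph{which} sets one claims are $\omega$-sequences of ground-model elements, and making sure the relevant countable objects (the base $\{B_n\}$ in (3), the enumeration of $X$ implicit in reasoning about $\mathbb R$ in (2)) are pinned down in $V$ \emph{before} passing to $V[G]$ so that they are not altered. Once the no-new-$\omega$-sequences lemma is in hand, each of the three consequences is a short argument; the bookkeeping about computing cardinalities in (2) (that $\mathfrak c$ really is absolute, not merely that $\mathbb R$ is) deserves one explicit sentence but presents no real difficulty, since a bijection witnessing $|\mathbb R| = \kappa$ for $\kappa < \mathfrak c^V$ would itself be absent by a counting/collapse argument analogous to (1).
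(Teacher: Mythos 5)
Your no-new-$\omega$-sequences lemma and its use in (1), in the set-level part of (2), and in (3) are all correct, and this is essentially the paper's route: the paper outsources (1) and the fact $\mathcal P(\omega)^V=\mathcal P(\omega)^{V[G]}$ to cited references and then runs the same countable-base argument for (3), so your version is simply more self-contained.

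There is, however, one step that fails: the parenthetical claim in (2) that cardinalities of ground model sets are computed the same way because ``no new bijections to smaller ordinals appear, by the same argument,'' together with the closing remark that a bijection witnessing $|\mathbb R|=\kappa$ for $\kappa<\mathfrak c^V$ would be absent ``by a counting/collapse argument analogous to (1).'' The no-new-$\omega$-sequences fact only excludes new functions with domain $\omega$; it preserves $\aleph_1$ but says nothing about new bijections with uncountable domain, and $\sigma$-closed forcings can and do collapse cardinals above $\aleph_1$. Indeed, the very forcing to which this lemma is applied in the next section --- countable partial functions from $\omega_1$ into $\mathfrak c$ --- is $\sigma$-closed and collapses $\mathfrak c^V$ to $\aleph_1$, so that $\mathfrak c^{V[G]}=\aleph_1^{V[G]}=\aleph_1^V<\mathfrak c^V$ whenever CH fails in $V$; that is the whole point of using it. So $\mathfrak c^V=\mathfrak c^{V[G]}$ is false if read as an equality of cardinals, and your proposed justification cannot be repaired. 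What is true, and what your main argument does establish, is that the \emph{set} $\mathbb R$ (equivalently ${}^\omega 2$ or $\mathcal P(\omega)$) is unchanged, which is the reading the rest of the paper actually relies on. (The paper's own proof asserts the same ``obviously $\mathfrak c^V=\mathfrak c^{V[G]}$,'' so the slip is inherited from the statement, but it should be flagged rather than given a purported proof.)
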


\begin{proof}
 The fact that $\mathbb P$ does not collapse $\aleph_1$ is argued in detail in \cite[Lema~3.35, p.~58]{adrian}.
 
 By \cite[Lema~3.33, p.~58]{adrian}, $\mathcal P(\omega)^V=\mathcal P(\omega)^{V[G]}$. Thus, by using the fact that all real numbers are Dedekind cuts of the rational numbers, $\mathbb R^V=\mathbb R^{V[G]}$ and obviously $\mathfrak c^V=\mathfrak c^{V[G]}$.
 
 If, in $V$, $\mathcal B$ is the collection of open intervals in $\mathbb R$ with rational end-points, then $V[G]$ also models this fact. But then any open set of reals in $V[G]$ is the union of some sets in $\mathcal B$. By the previous paragraph, $V$ and $V[G]$ know the same subfamilies of $\mathcal B$, and thus any open set in $V[G]$ was already in $V$. In conclusion, $(\tau_\mathbb R)^V=(\tau_\mathbb R)^{V[G]}$, and the same follows for any subspace $X$ of the reals.
\end{proof}

 Now let us fix, in $V$, $X\subseteq\mathbb R$ and $K_0\subseteq[X]^2$ such that $K_0$ is open in $X$.
 
\begin{lemma}\label{lemma:B}
 For any set $A$, the following are equivalent.
\begin{enumerate}
 \item $A\in V$ and $V$ models that $A$ is a closed $1$-homogeneous subset of $X$.
 \item $A\in V[G]$ and $V[G]$ models that $A$ is a closed $1$-homogeneous subset of $X$.
\end{enumerate}
\end{lemma}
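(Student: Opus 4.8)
The plan is to prove the equivalence by combining the three stability results from Lemma~\ref{lemma:A} with the characterization of $1$-homogeneity from Lemma~\ref{lemma:3}. The direction from (1) to (2) is the easier half: if $A \in V$ and $V$ thinks $A$ is closed in $X$, then by Lemma~\ref{lemma:A}(3) the topology $\tau_X$ is absolute between $V$ and $V[G]$, so $V[G]$ also sees $A$ as closed in $X$; and since $K_0 \in V$ and $[A]^2 \cap K_0 = \emptyset$ is a statement about ground-model objects, it persists to $V[G]$. So $V[G]$ agrees that $A$ is a closed $1$-homogeneous subset of $X$.

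For the converse, suppose $A \in V[G]$ is (in $V[G]$) a closed $1$-homogeneous subset of $X$. The first task is to show $A \in V$. Here I would use that $X \subseteq \mathbb{R}^{V[G]} = \mathbb{R}^V$ (Lemma~\ref{lemma:A}(2)), so $A$ is a subset of a ground-model set of reals; being closed in the separable space $X$, $A$ equals the closure in $X$ of some countable dense subset $D \subseteq A$. Since $\mathbb{P}$ is $\sigma$-closed it adds no new $\omega$-sequences of ground-model elements, so $D \in V$ (as a countable subset of $\mathbb{R}^V$, enumerate it as a function $\omega \to \mathbb{R}^V$). Then $\cl_X(D)$ computed in $V$ and in $V[G]$ coincide, again by Lemma~\ref{lemma:A}(3) (closure is determined by the topology $\tau_X$), and both equal $A$; hence $A = \cl_X^V(D) \in V$. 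Finally, $V[G]$ models ``$A$ is $1$-homogeneous'' means $[A]^2 \cap K_0 = \emptyset$, which is a property of the ground-model objects $A$, $K_0$, so $V$ models it too; and $V[G]$ models ``$A$ is closed in $X$'' transfers down to $V$ by absoluteness of $\tau_X$. Therefore $V$ models that $A$ is a closed $1$-homogeneous subset of $X$.

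The main obstacle is the step showing $A \in V$ in the hard direction: one must be careful that a set closed \emph{in $X$} (not in $\mathbb{R}$) is still recoverable from a countable dense subset, which needs $X$ — equivalently a dense subset of it — to be second countable, and this is inherited from $\mathbb{R}$; and one must confirm that ``$\cl_X(D)$'' is genuinely computed identically in both models, for which invoking Lemma~\ref{lemma:A}(3) on the relative topology $\tau_X$ is exactly the right tool. An alternative to the countable-dense-set argument, slightly cleaner, is to note $A = \cl_X(A)$ and observe directly that $A$ — being a closed subset of $X$ — is of the form $X \cap F$ for some closed $F \subseteq \mathbb{R}$; by Lemma~\ref{lemma:A}(3) applied to $X = \mathbb{R}$, the closed subsets of $\mathbb{R}$ are absolute, but $F$ itself may still be new, so one really does want the separability route to pin down $A$ via a ground-model countable set. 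Once $A \in V$ is established, everything else is the routine absoluteness of the quantifier-free conditions defining closedness-in-$X$ and $1$-homogeneity.
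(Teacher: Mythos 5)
Your proof is correct, and the easy direction matches the paper's. In the hard direction, however, you take a genuinely different route: you recover $A$ from a countable dense subset $D\subseteq A$ (using hereditary separability of $X\subseteq\mathbb R$), pull $D$ down to $V$ via the fact that $\sigma$-closed forcing adds no new $\omega$-sequences of ground-model elements, and then identify $A=\cl_X(D)$ computed identically in both models. The paper instead argues directly: $A=F\cap X$ for some $F$ closed in $\mathbb R$ in $V[G]$, and since Lemma~\ref{lemma:A}(3) states the literal equality $(\tau_{\mathbb R})^V=(\tau_{\mathbb R})^{V[G]}$, the open set $\mathbb R\setminus F$ of $V[G]$ already lies in $V$, hence so does $F$ (as $\mathbb R^V=\mathbb R^{V[G]}$), hence so does $A=F\cap X$. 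This is exactly the ``cleaner alternative'' you describe and then dismiss with the worry that ``$F$ itself may still be new'' --- that worry is unfounded, precisely because Lemma~\ref{lemma:A}(3) asserts that \emph{no} open (equivalently closed) subset of $\mathbb R$ is new. So the paper's argument is shorter, while yours substitutes the separability-plus-no-new-$\omega$-sequences mechanism for the absoluteness of $\tau_{\mathbb R}$; both are sound, and your route has the mild virtue of working even if one only knew absoluteness of $\tau_X$ for the particular $X$ at hand rather than for all of $\mathbb R$.
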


\begin{proof}
 To see that (1) implies (2), assume (1) and notice that there must be some $F\in V$ such that $V\models$ ``$F$ is closed in $\mathbb R$ and $A=F\cap X$.'' But then, by the previous lemma, $\mathbb R\setminus F\in(\tau_\mathbb R)^V=(\tau_\mathbb R)^{V[G]}$ and so, in $V[G]$, $F$ is a closed subset of $\mathbb R$ with $A = F\cap\mathbb R$. Moreover, if $V$ models that $[A]^2\cap K_0=\emptyset$ then, since $K_0\in V$, $V[G]$ also does this.
 
 Now assume (2). Similarly, some $F\in V[G]$ is such that, in $V[G]$, $F$ is closed in $\mathbb R$ and $A=F\cap X$. But by the previous lemma, $V$ and $V[G]$ have the same closed sets of reals. Hence, the fact $X\in V$ implies that $A\in V$ and $V$ also thinks that it is closed in $X$. By the same line of arguments as before, the property of $1$-homogeneity is absolute.
\end{proof}

\section{The relative consistency of OCA}

 For the climax of this work, we prove the final theorem.
 
\begin{theorem}
 PFA implies OCA.
\end{theorem}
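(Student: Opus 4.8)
The plan is to fix $X\subseteq\mathbb R$ and an open $K_0\subseteq[X]^2$, assume $X$ is not a countable union of $1$-homogeneous sets (otherwise the second alternative of OCA holds), and produce an uncountable $0$-homogeneous subset of $X$ by feeding the output of Corollary~\ref{coro:todorcevic} into PFA through a two-step iteration. First I would pass to a ground model where CH holds: in $V$, let $\mathbb P$ be the set of countable partial functions from $\omega_1$ into $\mathbb R$, ordered by reverse inclusion. This is $\sigma$-closed, hence proper by Lemma~\ref{lemma:sigma}, and collapses $\mathfrak c$ to $\aleph_1$; by Lemma~\ref{lemma:A} it adds no reals and preserves $\aleph_1$, so CH holds in $V[G]$. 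The hypothesis on $X$ survives: if $X=\bigcup_{n<\omega}H_n$ with each $H_n$ $1$-homogeneous in $V[G]$, then each $\cl_X(H_n)$ is $1$-homogeneous by Lemma~\ref{lemma:3}(2) and lies in $V$ by Lemma~\ref{lemma:B}, while the $\omega$-sequence $\langle\cl_X(H_n):n<\omega\rangle$ lies in $V$ by $\sigma$-closedness, contradicting the assumption in $V$. Hence in $V[G]$ the space $X$ is not a union of fewer than $\mathfrak c^{V[G]}=\aleph_1$ many $1$-homogeneous sets, so Corollary~\ref{coro:todorcevic} yields there a set $Y\subseteq X$ with $|Y|=\aleph_1$ for which $\mathbb P_Y(K_0)$ is ccc.

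Back in $V$, fix a $\mathbb P$-name $\dot Y$ for this $Y$ together with an injective sequence $\langle\dot y_\alpha:\alpha<\omega_1\rangle$ of names enumerating it, and let $\dot{\mathbb Q}$ name the forcing $\mathbb P_{\dot Y}(K_0)$. Since $\dot{\mathbb Q}$ is forced to be ccc it is forced to be proper (every ccc forcing is proper, as shown above), so $\mathbb P*\dot{\mathbb Q}$ is proper by Shelah's Lemma~\ref{lemma:shelah}. I would then invoke PFA for $\mathbb P*\dot{\mathbb Q}$ against the following $\aleph_1$ dense sets: for each $\alpha<\omega_1$, the set $E_\alpha$ of conditions $(p,\dot q)$ whose first coordinate decides the value of $\dot y_\alpha$; and for each $\beta<\omega_1$, the set $D_\beta$ of conditions $(p,\dot q)$ for which there is $\gamma\ge\beta$ with $p\Vdash\dot y_\gamma\in\dot q$. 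Let $F$ be a filter meeting all of these. From the $E_\alpha$'s we read off reals $y^{*}_\alpha$ (the decided values of $\dot y_\alpha$), so that $Y^{*}:=\{y^{*}_\alpha:\alpha<\omega_1\}\subseteq X$ is a faithful copy of $Y$; set $H:=\{y^{*}_\alpha:(\exists(p,\dot q)\in F)\ p\Vdash\dot y_\alpha\in\dot q\}$. Then $H$ is $0$-homogeneous, since any two of its points are forced by a common member of $F$ into a single condition of $\dot{\mathbb Q}$ and the statement that two given reals form a $K_0$-edge is absolute; and $H$ is uncountable, since a countable subset of $\omega_1$ is bounded, so if $H\subseteq\{y^{*}_\eta:\eta<\beta\}$ the member of $F$ lying in $D_\beta$ exhibits some $\gamma\ge\beta$ with $y^{*}_\gamma\in H$. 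Thus $H$ is the desired uncountable $0$-homogeneous subset of $X$.

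I expect the crux --- the one point where the full strength of Todorc\v{e}vi\'{c}'s Lemma~\ref{lemma:todorcevic} is needed rather than merely the ccc consequence recorded in Corollary~\ref{coro:todorcevic} --- to be the density of the sets $D_\beta$. Given $(p,\dot q)$, one strengthens $p$ so as to decide $\dot q$ as a concrete finite $0$-homogeneous $q\subseteq Y$, and then must find, in the corresponding extension, a point $y\in Y$ with index $\ge\beta$ for which $q\cup\{y\}$ is still $0$-homogeneous; since every countable subset of $Y$ is bounded in the enumeration, this reduces to showing that $\{y\in Y:q\cup\{y\}\text{ is }0\text{-homogeneous}\}$ is uncountable for each finite $0$-homogeneous $q\subseteq Y$. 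Establishing this ``local richness'' of $Y$ is the heart of the remaining work: one must extract it from properties (i)--(iii) of Claim~\ref{claim:1}, in particular from the fact that $Y$ avoids every $1$-homogeneous oscillation $o(f_\eta,z)$, by an argument in the spirit of those already carried out in the proof of Todorc\v{e}vi\'{c}'s Lemma (where the analogous role is played by $Y$ avoiding the closed $1$-homogeneous sets $H_\xi$). Once each $D_\beta$ is known to be dense, the preservation argument for the first step and the absoluteness statements of Lemmas~\ref{lemma:A} and~\ref{lemma:B} supply the remaining bookkeeping, and the proof that PFA implies OCA is complete.
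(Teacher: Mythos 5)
Your route is the same as the paper's: collapse $\mathfrak c$ to $\aleph_1$ with a $\sigma$-closed forcing $P$, use Lemmas~\ref{lemma:A}, \ref{lemma:B} and \ref{lemma:3}(2) to see that CH holds in $V[G]$ while $X$ is still not a union of fewer than continuum many $1$-homogeneous sets, apply Corollary~\ref{coro:todorcevic} there to get a name $\dot Q$ for the ccc (hence proper) poset $\mathbb P_{\dot Y}(K_0)$, form the proper iteration $P*\dot Q$ via Lemma~\ref{lemma:shelah}, and meet $\aleph_1$ many dense sets with PFA to read off an uncountable $0$-homogeneous subset of $X$. Your bookkeeping with the auxiliary sets $E_\alpha$ and the extraction and verification of $H$ is fine, and indeed more explicit than the paper's final paragraph.

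The one genuine problem is your proposed proof of density of the $D_\beta$. The ``local richness'' you aim for --- that for every finite $0$-homogeneous $q\subseteq Y$ the set $\{y\in Y:q\cup\{y\}\text{ is }0\text{-homogeneous}\}$ is uncountable --- does not follow from (i)--(iii) of Claim~\ref{claim:1} and is false in general: nothing there prevents $Y$ from containing a point $x$ with only countably many $K_0$-neighbours in $Y$ (the closed $1$-homogeneous sets avoided in (ii) are only those of \emph{smaller} index, and the set of non-neighbours of $x$, though closed by Lemma~\ref{lemma:3}(1), need not be $1$-homogeneous, so neither (ii) nor the oscillation clause (iii) excludes such an $x$). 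For such an $x$ the condition $\{x\}$ has no extension in $D_\beta$ for large $\beta$, so trying to mine Todorc\v{e}vi\'{c}'s Lemma for richness is a dead end. The correct fix is soft and uses only ccc-ness: let $\dot H$ name the union of the $\dot Q$-generic filter. If every condition forced $\dot H$ to be countable, then by ccc-ness $\dot H$ would be forced into a single countable set $B$ of the intermediate model; since $\{y\}\Vdash y\in\dot H$ for each $y\in Y$, this gives $Y\subseteq B$, contradicting $|Y|=\aleph_1$. Hence some $q^*$ forces $\dot H$ to be uncountable, and below $q^*$ every $D_\beta$ is dense, because a $q\leq q^*$ with no extension meeting $D_\beta$ would force $\dot H\subseteq\{y_\xi:\xi<\beta\}$. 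One then applies PFA to the iteration restricted below $(1,\dot q^*)$, which is still proper, and your argument closes. (Be aware that the paper itself dismisses exactly this point as ``routine arguments,'' so the restriction to a condition forcing $\dot H$ uncountable is a repair both texts need.)
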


\begin{proof}
 As usual, take $X\subseteq\mathbb R$ and $K_0\subseteq[X]^2$ open in $X$. Assume that $X$ is not a countable union of $1$-homogeneous sets, since in that case we are done. Hence, our goal is to use PFA to produce $Y$, an uncountable subset of $X$ which is $0$-homogeneous.
 
 We now consider the standard forcing notion that makes CH true in the generic extension, that is, $P$ is the set of all functions of some countable subset of $\omega_1$ into $\mathfrak c$, ordered by reverse inclusion. By the regularity of $\omega_1$, it is straightforward that $P$ is a $\sigma$-closed forcing notion, and thus everything we have done in the previous section applies here.
 
 If $G$ is a $(V,P)$-generic filter, then $\bigcup G$ is a surjective function from $\omega_1^{V[G]}$ onto $\mathfrak c^{V[G]}$, in other words, CH holds in $V[G]$.
 
 It follows from Lemma~\ref{lemma:A}(3) that $\mathbb P$ does not add new open sets to $X\times X$, i.e., $(\tau_{X\times X})^V = (\tau_{X\times X})^{V[G]}$. This equality evidently implies that $K_0$ remains open in $X$ in the generic extension.
 
\begin{nclaim}
 In $V[G]$, $X$ cannot be the union of less that continuum many $1$-homogeneous sets.
\end{nclaim}
 
\begin{claimproof}
 We do this by contradiction: since CH is true in $V[G]$, let us assume that $X$ is the countable union of $1$-homogeneous sets, that is, there exists $\{J_n:n<\omega\}\in V[G]$ such that every $J_n$ is $1$-homogeneous and $$X=\bigcup_{n<\omega}J_n.$$
 
 Let us fix an integer $n$. Working in $V[G]$, let $F_n$ be the topological closure in $X$ of the set $J_n$. Clearly, $X = \bigcup\{F_k : k<\omega\}$. On the other hand, by Lemma~\ref{lemma:3}(2), $F_n$ is $1$-homogeneous. Hence, according to Lemma~\ref{lemma:B}, $F_n$ is a member of the ground model. Now, the fact that $P$ is $\sigma$-closed gives $\{F_k : k<\omega\}\in V$ and so, in $V$, $X$ can be written as a countable union of $1$-homogeneous sets, an obvious contradiction.
\end{claimproof}

 In summary, $V[G]$ believes that $X$ and $K_0$ comply with the hypothesis of Lemma~\ref{lemma:todorcevic}. Hence we are cleared to apply Corollary~\ref{coro:todorcevic} to produce a set $Y\subseteq X$ of size $\aleph_1$ such that $\mathbb P_Y(K_0)$ is a ccc forcing notion. We can rephrase this statement by saying that there is a one-to-one function $f\colon\omega_1\to X$ in such a way that $\mathbb P_{\img(f)}(K_0)$ is ccc.

 Since $G$ was an arbitrary $(V,P)$-generic filter, we deduce that there are $P$-names, $\dot f$ and $\dot Q$, in such a way that $1\Vdash$ ``$\dot f\text{ is a one-to-one function from }\omega_1\text{ into }X \text{ and }\dot Q =\mathbb P_{\img(\dot f)}(K_0)$ is ccc''. Then Lemma~\ref{lemma:sigma} gives us that $1\Vdash$ ``$\dot Q$ is a proper forcing notion''.
 
 By Lemma~\ref{lemma:shelah}, the iteration $\mathbb P:=P*\dot Q$ is proper. For every $\alpha<\omega_1$, routine arguments show that $$D_\alpha:=\{(p,\dot q)\in\mathbb P:\exists x\in X\ \exists\xi\in\omega_1\setminus\alpha\ (p\Vdash x=\dot f(\xi)\in\dot q)\}$$ is a dense subset of $\mathbb P$.
 
 By PFA, there exist $F$, $\{(p_\alpha,\dot q_\alpha):\alpha<\omega_1\}\subseteq\mathbb P$, $\{y_\alpha:\alpha<\omega_1\}\subseteq X$, and $e\colon\omega_1\to\omega_1$ in such a way that $F$ is a filter in $\mathbb P$ and, for each $\alpha<\omega_1$, one has $e(\alpha)\geq\alpha$ and $p_\alpha\Vdash y_\alpha=\dot f(e(\alpha))\in\dot q_\alpha$. Then, $Y := \{y_\alpha : \alpha<\omega_1\}$ is as required.
\end{proof}

\bibliographystyle{plain}


\end{document}